\numberwithin{equation}{section}
\newcommand{\eps}{\varepsilon}
\newtheorem{theorem}{Theorem}[section]
\newtheorem*{theoremA}{Theorem A}
\newtheorem{lemma}{Lemma}[section]
\newtheorem{proposition}{Proposition}[section]
\theoremstyle{definition}
\newtheorem{remark}{Remark}[section]
\newcommand{\N}{{\mathbb N}}
\newcommand{\R}{{\mathbb R}}
\newcommand{\K}{\mathbf{K}}
\title{Gelfand type problem for two phase porous media}
\author{Peter  V. Gordon
\thanks{
Department of  Mathematics, The University of Akron, Akron, Ohio 44325, USA.  E-mail: {\tt pgordon@uakron.edu}}
\and Vitaly Moroz
\thanks{
Department of Mathematics, Swansea University, Singleton Park, Swansea, SA2 8PP,
Wales, United Kingdom. E-mail: {\tt V.Moroz@swansea.ac.uk}
}}
\begin{document}
\date{}
\maketitle

\begin{abstract}
We consider a generalization of the Gelfand problem arising in  Frank-Kamenetskii theory of thermal explosion.
This generalization is a natural extension of the Gelfand problem to  two phase materials, where, in
contrast to the classical Gelfand problem which utilizes single temperature approach,
the state of the system is described by two different temperatures.
We show that similar to the classical Gelfand
problem the thermal explosion occurs exclusively due to the absence of
stationary temperature distribution. We also show that the presence of
inter-phase heat exchange delays a thermal explosion.
Moreover, we prove that in the limit of infinite heat exchange between phases
the problem of thermal explosion in two phase porous media reduces to the
classical Gelfand problem with renormalized constants.
\smallskip

%
\end{abstract}

\setcounter{equation}{0}

\section{Introduction}
Superlinear parabolic equations and systems of such equations  serve as mathematical models of
 many  nonlinear phenomena arising in natural sciences. It is well known that such models
 may often produce  solutions that do not exist globally  in time due to formation singularities.
In particular, there are solutions which become infinite   either somewhere or everywhere in the spatial domain  in a finite time.
Formation of such  singularities is commonly referred to as blow up  and has attracted considerable attention of scientists and
engineers over past decades \cites{Matano,Galaktionov,Souplet}.
The classical problem in a theory of blow up for nonlinear parabolic equations, which is  widely known in
mathematical literature as a Gelfand problem, reads
\begin{eqnarray}\label{eq:G}
\left\{\begin{array}{ll}
W_t - \Delta W=\Lambda g(W) &  \mbox{in} \quad (0,T)\times \Omega, \\
W=0& \mbox{on} \quad \partial \Omega,\\
W(0,\cdot)=0& \mbox{in}\quad \Omega,
\end{array}\right.
\end{eqnarray}
where $\Omega\subset \mathbb{R}^N$ is a smooth bounded domain, $g: \R\to(0,\infty)$
is a $C^1$ convex non-decreasing function satisfying
\begin{eqnarray}\label{eq:g}
\int_{x_0}^{\infty}\frac{ds}{g(s)}<\infty \quad \mbox{for some} \quad x_0\ge 0,
\end{eqnarray}
and $\Lambda>0$ is a parameter.
This problem was originally introduced in a context of thermo-diffusive combustion
 as a model of thermal explosion,  the spontaneous development of rapid rates of heat release by chemical reactions in combustible mixtures and materials being initially in a non-reactive state \cites{ZBLM,Shteinberg}.
The model \eqref{eq:G}  describes an evolution of initially uniform  temperature field $W$
which diffuses in space, increases in a bulk due to the heat release described by a reaction term $\Lambda g$ and is fixed on the boundary (cold boundary). The model \eqref{eq:G} was derived by Frank-Kamenetskii  \cite{FK}
as a short time asymptotic of
a  standard thermo-diffusive
model and  describes an initial stage of self-ignition of combustible mixture.

Depending on the parameters of this problem the solutions of  \eqref{eq:G}  either   blow up or exist globally.
In a context of  combustion  the first case corresponds to successful initiation of combustion process
whereas the second one corresponds to the ignition failure. Basic physical reasoning discussed in \cites{FK,ZBLM} and
formal (intermediate asysmptotics) arguments of Barenblatt presented in \cite{Gelfand} suggest that blow up in model
\eqref{eq:G} occurs exclusively due to the absence of stationary solutions for this problem.
That is the absence of stationary temperature distribution $w$ that solves, in a weak sense, the following time independent problem
\begin{eqnarray}\label{eq:GS}
\left\{\begin{array}{rcll}
- \Delta w&=&\Lambda g(w),\quad w>0 &\mbox{in} \quad \Omega, \\
w&=&0& \mbox{on} \quad \partial \Omega.
\end{array}\right.
\end{eqnarray}
These formal arguments of \cite{Gelfand} were made rigorous in \cite{Brezis-96}.
The following theorem summarizes the main results regarding solutions of problems \eqref{eq:G} and \eqref{eq:GS}, see \cite{Brezis-96},  \cite{Cazenave}*{Theorem 3.4.1} and further references therein.

\begin{theoremA}

\smallskip

Parabolic problem \eqref{eq:G} has a classical global solution if and only if stationary problem \eqref{eq:GS} has a weak solution.

There exists $0<\Lambda^*<\infty$ such that:

\smallskip
$i)$ for $\Lambda>\Lambda^*$ problem \eqref{eq:GS} has no weak solutions,

$ii)$ for $0<\Lambda<\Lambda^*$ problem \eqref{eq:GS} has a minimal classical solution $w_\Lambda$,

$iii)$ $w_\Lambda(x)$ is a monotone increasing functions of $\Lambda$, and for $\Lambda=\Lambda^*$ problem \eqref{eq:GS} has a weak solution $w^*$ defined by
\begin{eqnarray}
w^*(x):=\lim_{\Lambda\to\Lambda^*}w_\Lambda(x).
\end{eqnarray}
\end{theoremA}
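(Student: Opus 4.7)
The plan is to prove the three assertions of the theorem in turn, exploiting throughout the parabolic/elliptic correspondence asserted in its first sentence. To establish the equivalence of global existence and stationary solvability, I note that if $w$ is a weak solution of \eqref{eq:GS}, then since $W(0,\cdot)=0\le w$, parabolic comparison gives $0\le W(t,\cdot)\le w$ as long as $W$ exists, which combined with local well-posedness yields a global classical solution of \eqref{eq:G}. Conversely, since $W(h,\cdot)\ge 0=W(0,\cdot)$, applying comparison to $W(\cdot+h,\cdot)$ shows $t\mapsto W(t,x)$ is monotone nondecreasing; if the solution is global, parabolic regularity gives uniform bounds on every compact subset of $\Omega$, so the pointwise limit $w(x):=\lim_{t\to\infty}W(t,x)$ is finite, and integrating \eqref{eq:G} in time against test functions and invoking monotone convergence exhibits $w$ as a weak solution of \eqref{eq:GS}.

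Next I would define
\[
\Lambda^*:=\sup\{\Lambda>0:\eqref{eq:GS}\text{ has a weak solution}\}.
\]
To see $\Lambda^*>0$, I construct a classical supersolution for small $\Lambda$: with $\phi>0$ solving $-\Delta\phi=1$ in $\Omega$ with Dirichlet data, $\bar w=c\phi$ is a supersolution whenever $c\ge\Lambda g(c\|\phi\|_\infty)$, which holds for small $c$ and $\Lambda$. Monotone iteration between the subsolution $0$ and the supersolution $\bar w$ produces a minimal classical solution $w_\Lambda$. To see $\Lambda^*<\infty$, I test \eqref{eq:GS} against the first Dirichlet eigenfunction $\phi_1>0$ of $-\Delta$ with eigenvalue $\lambda_1$: integration by parts gives $\lambda_1\int_\Omega w\,\phi_1=\Lambda\int_\Omega g(w)\,\phi_1$. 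Convexity of $g$ combined with \eqref{eq:g} forces $g(s)/s\to\infty$, so $g(s)\ge Cs-D$ with $C$ as large as desired; choosing $\Lambda C>\lambda_1$ contradicts the identity.

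For the monotone family and existence at $\Lambda^*$, observe that for $\Lambda_1<\Lambda_2<\Lambda^*$ the minimal solution $w_{\Lambda_1}$ is a subsolution at parameter $\Lambda_2$ since $-\Delta w_{\Lambda_1}\le\Lambda_2 g(w_{\Lambda_1})$; iterating upward from it, capped by a supersolution of \eqref{eq:GS} at $\Lambda_2$, yields $w_{\Lambda_2}\ge w_{\Lambda_1}$. The pointwise limit $w^*(x):=\lim_{\Lambda\nearrow\Lambda^*}w_\Lambda(x)$ therefore exists in $[0,\infty]$. The principal obstacle is proving that $w^*$ is itself a weak solution of \eqref{eq:GS} at $\Lambda=\Lambda^*$ and finite almost everywhere. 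Substituting the superlinear lower bound $g(s)\ge Cs-D$ into the $\phi_1$-identity yields
\[
\int_\Omega w_\Lambda\,\phi_1\le\frac{\Lambda D\int_\Omega\phi_1}{\Lambda C-\lambda_1}\qquad(\Lambda C>\lambda_1),
\]
a uniform $L^1(\Omega;\phi_1\,\mathrm{d}x)$ bound on $g(w_\Lambda)$ for $\Lambda$ near $\Lambda^*$. Via the test-function method of Brezis (testing against $\zeta$ with $-\Delta\zeta=\phi_1$, $\zeta=0$ on $\partial\Omega$), monotone convergence then passes to the limit in the weak formulation and identifies $w^*$ as a weak solution. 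This last step, where the ``weak'' rather than ``classical'' qualifier becomes essential, is the core of the Brezis--Cazenave machinery cited.
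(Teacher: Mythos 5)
First, a point of orientation: the paper does not prove Theorem~A at all --- it is quoted as background from \cite{Brezis-96} and \cite{Cazenave} --- so the only internal comparison available is with the authors' proofs of the system analogues (Theorem~\ref{t:1}, Theorem~\ref{t:2} and the Remark following it), and that comparison is revealing: the paper proves the system version only in a weakened form precisely because the steps you treat as routine are the hard ones. The central gap in your proposal is the identification of the weak and classical solvability thresholds. You define $\Lambda^*$ as the supremum of the $\Lambda$ for which \eqref{eq:GS} has a \emph{weak} solution, which makes (i) nearly tautological, but then (ii) requires that for every $\Lambda<\Lambda^*$ the minimal solution be \emph{classical}. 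Your monotone iteration at such a $\Lambda$ is capped only by a weak --- possibly singular and unbounded --- supersolution inherited from some $\Lambda_0\in(\Lambda,\Lambda^*]$; iteration below a singular supersolution produces only a weak solution, and no $L^\infty$ bound (hence no elliptic regularity) follows. The missing ingredient is the convexity regularization of \cite{Brezis-96} (Theorem~3 there): from a weak solution $w$ at $\Lambda_0$ one builds a \emph{bounded} supersolution at $\Lambda<\Lambda_0$ via the substitution $\Psi$ defined by $\int_0^{\Psi(t)}ds/g(s)=(\Lambda/\Lambda_0)\int_0^{t}ds/g(s)$, which is bounded exactly because of \eqref{eq:g}. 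Without this, (i) and (ii) cannot hold with the same $\Lambda^*$; the paper's Remark explicitly calls this step ``very delicate even in the case of a single equation'' and, for the system \eqref{eq:P}, settles for two distinct thresholds $\lambda^*_\nu<\lambda^{**}_\nu$. The same issue undermines your proof of the first sentence of Theorem~A: if $w$ is a singular weak solution, the comparison $0\le W\le w$ gives no $L^\infty$ control on $W$, so ``local well-posedness yields a global classical solution'' does not follow. Note that the paper's Theorem~\ref{t:1} claims this implication only from a \emph{classical} elliptic solution.

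Two smaller defects. Your finiteness argument for $\Lambda^*$ as written does not close: from $g(s)\ge Cs-D$ the eigenfunction identity yields only the a~priori bound $(\Lambda C-\lambda_1)\int_\Omega w\phi_1\le\Lambda D\int_\Omega\phi_1$, not a contradiction; you need the homogeneous bound $g(s)\ge\eta s$ with $\eta=\inf_{s>0}g(s)/s>0$ (available since $g(0)>0$, $g$ is convex and superlinear by \eqref{eq:g}), which is exactly what the paper uses in its proof of part (i) of Theorem~\ref{t:2}. In the converse parabolic direction, global existence of a classical solution does \emph{not} by itself give ``uniform bounds on every compact subset'' --- infinite-time blow-up is possible (and occurs at $\Lambda=\Lambda^*$ when $w^*$ is singular); the finiteness of $\lim_{t\to\infty}W$ must instead come from the Kaplan-type estimate obtained by testing with $\phi_1$ and applying Jensen's inequality together with \eqref{eq:g}, as the paper does in Section~4. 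Your remaining steps (the small-$\Lambda$ supersolution, monotonicity in $\Lambda$, the uniform $L^1$ bounds near $\Lambda^*$ and the passage to the limit) are sound and do match the strategy the paper uses for the system.
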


The statement of Theorem A, from a physical perspective, has a very clear interpretation.
Indeed, the parameter $\Lambda$ can be understood as a scaling factor
that reflects the size of the domain, which increases as $\Lambda$ increases.
Thus, in relatively small domains the cold boundary suppresses intensive chemical
reaction in the bulk which leads to a stationary temperature distribution, whereas when
the size of the domain exceeds some critical value corresponding to $\Lambda^*$
the cooling on the boundary becomes insufficient to prevent chemical reaction inside the domain
$\Omega$,
which leads to thermal explosion.

The classical model \eqref{eq:G}  asserts that the process of combustion can be described
using a unified single temperature approach. This assumption, which has  rather wide range of validity,
however, is not applicable in certain situations.  For example in  combustion
of porous  materials the difference of temperatures
of gaseous and condensed phases  can be substantial, which changes  a combustion
process \cite{Shteinberg}.  As a result the model describing self ignition of porous media has to be
appropriately modified. Let us note that explosion in two phase materials has many technological applications ranging from ignition of metal nano-powders and solid rocket propellants to issues of safe storage of nuclear waste and industrial raw garbage  \cites{Dreizin,Shteinberg}.

In order to describe explosion in  two phase materials one may  adopt an
approach of Frank-Kamenetskii and make a standard reduction of governing
equations describing combustion of two phase porous materials. The conventional
system of equation for the dynamics of two phase material are well known and we
refer the reader to \cite{Margolis} for the details.  Partial linearization of these equations
incorporating Frank Kamenetskii transform \cites{FK,ZBLM} lead to a following system
\begin{eqnarray}\label{eq:PT}
\left\{\begin{array}{rcll}
U_t - \Delta U&=&\lambda g(U)+\nu(V-U),& \\
\alpha V_t- d\Delta V&=&\nu (U-V) &  \mbox{in} \quad (0,T)\times \Omega, \\
U=V&=&0& \mbox{on} \quad \partial \Omega,\\
U(0,\cdot)=V(0,\cdot)&=&0& \mbox{in}\quad \Omega,
\end{array}\right.
\end{eqnarray}
here $U(t,x)$ and $V(t,x)$ are  appropriately normalized temperatures of condensed (solid)  and gaseous phases respectively and $d>0$ is a ratio of effective gaseous and thermal diffusivity, $\nu>0$ is  inter-phase heat transfer coefficient and $\alpha>0$ is a parameter which depends on porosity and ratios of specific heats
of the solid and gaseous phases.
 It is important to note that the model \eqref{eq:PT} is formally identical to the one describing
 formation of hot spots in transistors. In this case variables $U$ and $V$ can de interpreted
 as temperatures of electron gas and of the lattice respectively see e.g. \cite{Osipov}.

 As one may expect the behavior of solutions for the problem \eqref{eq:PT}
 depends crucially on existence of stationary solutions for the time independent problem
\begin{eqnarray}\label{eq:P}
\left\{\begin{array}{rcll}
- \Delta u&=&\lambda g(u)+\nu(v-u),& \\
- d\Delta v&=&\nu (u-v) &  \mbox{in} \quad \Omega, \\
u,v&>& 0 &  \mbox{in} \quad \Omega, \\
u=v&=&0& \mbox{on} \quad \partial \Omega.
\end{array}\right.
\end{eqnarray}

The goal of this paper is to study dynamics of solutions for the problem \eqref{eq:PT} and
and its stationary states described by \eqref{eq:P}.  There are two main results of this paper.
Our first result states that similar to the classical Gelfand problem blow up in
system \eqref{eq:PT} is fully determined by solutions of problem \eqref{eq:P}.
Namely the following holds.
\begin{theorem}\label{t:1}
If elliptic problem  \eqref{eq:P} has a classical solution, then parabolic problem \eqref{eq:PT} has a global
classical solution. If parabolic problem \eqref{eq:PT} has a global classical solution, then elliptic problem
has a weak solution.
Moreover, the global classical solution of \eqref{eq:PT}
converges in $L^1$-norm to a minimal weak solution of \eqref{eq:P} as $t\to\infty$.
\end{theorem}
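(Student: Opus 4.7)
The plan is to mirror the Brezis strategy underlying Theorem A (the classical Gelfand result) component by component, exploiting the fact that \eqref{eq:PT} is a \emph{cooperative} parabolic system: the off-diagonal couplings $\nu(V-U)$ and $\nu(U-V)$ are nondecreasing in the opposite unknown, so standard comparison principles for cooperative systems (both classical and weak) are available. A preliminary observation is that, starting from zero initial data with $g(0)>0$, the solution $(U,V)$ of \eqref{eq:PT} is monotone non-decreasing in $t$: indeed $(U(t+\varepsilon,x),V(t+\varepsilon,x))$ majorises $(U(t,x),V(t,x))$ by cooperative comparison, using $U_t(0,\cdot)=\lambda g(0)>0$ together with the strong maximum principle to conclude $(U(\varepsilon),V(\varepsilon))>(0,0)$ pointwise for every small $\varepsilon>0$.

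For the first implication, given a classical solution $(u,v)$ of \eqref{eq:P}, view it as a stationary supersolution of \eqref{eq:PT} with $(0,0)$ as subsolution; cooperative comparison then yields $0\le U(t,x)\le u(x)$ and $0\le V(t,x)\le v(x)$ as long as the solution exists. Thus $g(U)$ is bounded and standard parabolic regularity extends the local solution to a global classical one.

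For the converse, monotonicity gives pointwise limits $u^*(x):=\lim_{t\to\infty}U(t,x)$ and $v^*(x):=\lim_{t\to\infty}V(t,x)$ in $[0,\infty]$, which must be shown to be finite a.e.\ and to solve \eqref{eq:P} weakly. Let $\phi_1>0$ denote the principal Dirichlet eigenfunction of $-\Delta$ on $\Omega$ with eigenvalue $\mu_1$ and set $m(t):=\int_\Omega U(t)\phi_1$, $n(t):=\int_\Omega V(t)\phi_1$, $c:=\|\phi_1\|_{L^1(\Omega)}$. Testing the first equation of \eqref{eq:PT} against $\phi_1$ and invoking Jensen's inequality (using convexity of $g$) gives
\begin{equation*}
m'(t) + (\mu_1+\nu)\,m(t) \;=\; \lambda\int_\Omega g(U)\phi_1 + \nu\, n(t) \;\ge\; \lambda c\, g\!\bigl(m(t)/c\bigr).
\end{equation*}
Condition \eqref{eq:g} together with convexity of $g$ forces $g(s)/s\to\infty$, so the right-hand side eventually dominates the linear sink $(\mu_1+\nu)m$, and $\int^\infty ds/g(s)<\infty$ then implies that any unbounded trajectory of $m$ blows up in finite time --- contradicting global classical existence. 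Hence $m$ is bounded; the linear equation $\alpha n'+(d\mu_1+\nu)n=\nu m$ then gives $n$ bounded, and returning to the $m$-equation forces $t\mapsto\int_\Omega g(U(t))\phi_1$ to stay bounded. Monotone convergence then places $u^*,v^*,g(u^*)\in L^1(\Omega;\phi_1\,dx)$. For any $\varphi\in C^2(\overline\Omega)$ with $\varphi|_{\partial\Omega}=0$, integrating each equation of \eqref{eq:PT} against $\varphi$ over $\Omega\times[0,T]$, dividing by $T$ and letting $T\to\infty$ makes the transient contribution vanish while the remaining integrals converge by monotone/dominated convergence, delivering the weak formulation of \eqref{eq:P}.

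The final assertion is now immediate: $\|U(t)-u^*\|_{L^1}+\|V(t)-v^*\|_{L^1}\to 0$ by monotone convergence, and minimality follows because any weak solution $(w_1,w_2)$ of \eqref{eq:P} is a stationary supersolution for \eqref{eq:PT}, so cooperative comparison gives $U(t)\le w_1$ and $V(t)\le w_2$ for all $t$, whence $u^*\le w_1$ and $v^*\le w_2$. The main technical obstacle is the Brezis-type blow-up argument for $m(t)$: one must combine \eqref{eq:g} and convexity of $g$ carefully to dominate the linear dissipation $(\mu_1+\nu)m$, and then justify the cooperative comparison principle and the limiting weak formulation in a setting where $u^*$, $v^*$ and $g(u^*)$ are a priori only in $L^1(\Omega;\phi_1\,dx)$ rather than $L^1(\Omega)$.
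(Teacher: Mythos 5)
Your overall strategy coincides with the paper's: quasi-monotone (cooperative) comparison with the stationary solution as a supersolution for global existence; monotonicity of $(U,V)$ in time; testing against the principal eigenfunction $\phi_1$ together with Jensen's inequality and the finite-time blow-up dichotomy forced by \eqref{eq:g} to bound $\int_\Omega U\phi_1$, $\int_\Omega V\phi_1$ and $\int_\Omega g(U)\phi_1$ uniformly in $t$; monotone convergence and passage to the limit in the weak formulation; and comparison with stationary supersolutions for minimality. (The paper obtains $U_t,V_t\ge 0$ by differentiating the system in $t$ and applying the component-wise comparison principle to $(U_t,V_t)$, whereas you use a time-shift comparison; both are standard and valid.)

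The one genuine gap is the step you flag at the end but do not close: your estimates only give $u^*,v^*, g(u^*)\in L^1(\Omega;\phi_1\,dx)\simeq L^1(\Omega;\delta\,dx)$, while the definition \eqref{eq:weak_sol} of a weak solution requires $u,v\in L^1(\Omega)$, the theorem asserts convergence in the $L^1(\Omega)$-norm, and --- most importantly --- your passage to the limit in $\int_\Omega U(-\Delta\varphi)$ is not justified, since $-\Delta\varphi$ is merely bounded and is not controlled by $\delta(x)$ near $\partial\Omega$, so a $\delta$-weighted bound on $U$ does not make that integral converge. The paper closes exactly this point by additionally testing both equations with the torsion function $\zeta$ solving \eqref{torsion1d}: because $c\delta\le\zeta\le C\delta$, the resulting right-hand sides are controlled by the already-established $\phi_1$-weighted bounds, while the left-hand side produces $\int_\Omega U$ and $d\int_\Omega V$, yielding the unweighted uniform $L^1$ bounds. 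Inserting this short step into your argument (before invoking monotone convergence and before passing to the limit in the weak formulation) makes the proof complete; the remaining points, including the comparison with a possibly singular weak stationary supersolution in the minimality claim, are treated at the same level of detail as in the paper.
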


The precise definition of a minimal classical and weak solution of stationary problem \eqref{eq:P} will be given later in Section \ref{Elliptic}. Here we note only that every classical solution is also a weak solution. On the other hand weak solutions may have singularities.
\smallskip

In a view of this result  a detailed information on stationary solutions is needed.
This is given by the following theorem.

\begin{theorem} \label{t:2}
Let $d>0$.
Then for every $\nu>0$ there exists $0<\lambda_\nu^*<\infty$ such that:
\smallskip

$i)$ for $\lambda>\lambda_\nu^*$ system \eqref{eq:P} has no classical solutions,

$ii)$ for $0<\lambda<\lambda_\nu^*$ system \eqref{eq:P} has a minimal classical solution $(u_{\lambda,\nu},v_{\lambda,\nu})$,

$iii)$ for $\nu>0$ both $u_{\lambda,\nu}(x)$ and $v_{\lambda,\nu}(x)$ are monotone increasing functions of $\lambda$  for every $x\in\Omega$, and for $\lambda=\lambda_\nu^*$ system \eqref{eq:P} has a weak solution $(u_\nu^*,v_\nu^*)$ defined by
\begin{eqnarray}
u_\nu^*(x):=\lim_{\lambda\to\lambda_\nu^*}u_{\lambda,\nu}(x),\qquad v_\nu^*(x):=\lim_{\lambda\to\lambda_\nu^*}v_{\lambda,\nu}(x)\qquad(x\in\Omega).
\end{eqnarray}

$iv)$ $\lambda_\nu^*\ge\Lambda^*$ and $\lambda_\nu^*=\lambda^*(\nu)$ is a nondecreasing function of $\nu>0$ having the following properties
\begin{eqnarray}
\lim_{\nu\to 0}\lambda^*(\nu)=\Lambda^*,\qquad \lim_{\nu\to\infty}\lambda^*(\nu)= \Lambda^* (1+d),
\end{eqnarray}
where $\Lambda^*$ is the critical value of the classical Gelfand problem \eqref{eq:GS}.

$v)$
for $\lambda<\lambda_\nu^*$,
$u_{\lambda,\nu}(x)$
is a non-increasing function of $\nu$ for every $x\in \Omega$.
For $\lambda<\Lambda^\ast$ and $\nu\to 0$ solution $(u_{\lambda,\nu},v_{\lambda,\nu})$
converges uniformly to $(u_0,0)$, where $u_0$ is the minimal solution of
\begin{eqnarray}\label{eq:P0}
\left\{\begin{array}{rclll}
- \Delta u_0&=&\lambda g(u_0),& u_0>0 &\mbox{in} \quad \Omega, \\
u_0&=&0 & &\mbox{on} \quad \partial \Omega.\\
\end{array}\right.
\end{eqnarray}
For $\lambda<\Lambda^\ast(1+d)$ and $\nu\to \infty$ solution $(u_{\lambda,\nu},v_{\lambda,\nu})$
converges uniformly to $(u_{\infty}, u_{\infty})$, where $u_\infty$ is the minimal solution of
\begin{eqnarray}\label{eq:Pinf}
\left\{\begin{array}{rclll}
- \Delta u_{\infty}&=&\frac{\lambda}{1+d}\, g(u_{\infty}), &
u_{\infty}>0 &  \mbox{in} \quad \Omega, \smallskip\\
u_{\infty}&=&0 & &\mbox{on} \quad \partial \Omega.\\
\end{array}\right.
\end{eqnarray}
\end{theorem}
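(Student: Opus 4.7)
The plan is to treat the system \eqref{eq:P} as a cooperative one (each coupling term is monotone increasing in the other unknown) and adapt the classical sub-/super-solution machinery for the Gelfand problem, exploiting throughout the two auxiliary quantities $W:=u+dv$ and $z:=u-v$. Adding and subtracting the two equations one obtains
\[
-\Delta W=\lambda g(u),\qquad -\Delta z+\tfrac{\nu(d+1)}{d}z=\lambda g(u),
\]
both with zero boundary values; since $\lambda g(u)>0$, the strong maximum principle on the second equation yields $0\le v\le u$ in $\Omega$, whence $W/(1+d)\le u\le W$. For small $\lambda>0$ I would use $(0,0)$ as a sub-solution and $(M\phi,M\phi)$ as a super-solution, where $\phi$ solves $-\Delta\phi=1$ in $\Omega$ with zero boundary values and $M\ge\lambda g(M\|\phi\|_\infty)$; a standard cooperative monotone iteration then produces the minimal classical solution, and $\lambda^*_\nu$ is defined as the supremum of $\lambda$ for which such a solution exists. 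Monotonicity in $\lambda$ is immediate because $(u_{\lambda_1,\nu},v_{\lambda_1,\nu})$ is a sub-solution at every $\lambda_2>\lambda_1$. Testing the $W$-equation against the first Dirichlet eigenfunction $\phi_1$ (with eigenvalue $\mu_1$), applying Jensen's inequality to the convex $g$, using $W\le(1+d)u$ and the finiteness of $\sup_{s>0}s/g(s)$ from \eqref{eq:g}, one obtains $\lambda^*_\nu\le(1+d)\mu_1\sup_{s>0}s/g(s)<\infty$; the same identity supplies the a priori bound $\int g(u_{\lambda,\nu})\phi_1\le C$ needed to construct the weak solution at $\lambda=\lambda^*_\nu$ as the monotone limit $\lambda\nearrow\lambda^*_\nu$.

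The two-sided bound $\Lambda^*\le\lambda^*_\nu\le\Lambda^*(1+d)$ in (iv) comes from two complementary comparisons with \eqref{eq:GS}. If $\lambda<\Lambda^*$ and $w_\lambda$ is the Gelfand classical solution, then $(w_\lambda,w_\lambda)$ is a super-solution of \eqref{eq:P} (the coupling term vanishes on the diagonal), so $\lambda\le\lambda^*_\nu$. Conversely, for any classical solution of \eqref{eq:P} the inequality $u\ge W/(1+d)$ yields $-\Delta(W/(1+d))\ge(\lambda/(1+d))g(W/(1+d))$, so $W/(1+d)$ is a bounded super-solution of \eqref{eq:GS} at parameter $\lambda/(1+d)$; together with the trivial sub-solution $0$ this produces a classical Gelfand solution, and Theorem A forces $\lambda/(1+d)\le\Lambda^*$.

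The hard part is the monotonicity in $\nu$ — both the non-increasing property of $u_{\lambda,\nu}$ in (v) and the non-decreasing behaviour of $\lambda^*(\nu)$ in (iv) — because a direct comparison fails: $(u_{\lambda,\nu_1},v_{\lambda,\nu_1})$ is neither a sub- nor a super-solution of \eqref{eq:P} at $\nu_2\ne\nu_1$. My plan is to differentiate the minimal branch in $\nu$, writing $\dot u:=\partial_\nu u_{\lambda,\nu}$ and $\dot v:=\partial_\nu v_{\lambda,\nu}$, and analyse the linearised cooperative system
\[
-\Delta\dot u+(\nu-\lambda g'(u))\dot u-\nu\dot v=v-u,\qquad -d\Delta\dot v+\nu\dot v-\nu\dot u=u-v,
\]
exploiting the strict positivity of the linearised operator on the minimal branch (strictly below $\lambda^*_\nu$), the signs $v-u\le 0\le u-v$ of the source terms, and a Krein--Rutman-type argument for cooperative systems to conclude $\dot u\le 0\le\dot v$; the non-decreasing behaviour of $\lambda^*(\nu)$ then follows by continuation, since $u_{\lambda,\nu}\le u_{\lambda,\nu_1}$ for $\nu\ge\nu_1$ prevents blow-up of the minimal solution. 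With these monotonicities in hand, the remaining statements in (iv)--(v) reduce to standard passages to the limit in the PDE. For $\nu\to 0$, the crude estimate $\|\nu(v-u)\|_\infty\le 2\nu\|u\|_\infty$ combined with monotonicity forces $(u_{\lambda,\nu},v_{\lambda,\nu})\to(u_0,0)$ solving \eqref{eq:P0} whenever the limit stays bounded; for $\nu\to\infty$, the $z$-equation yields the sharper pointwise bound $0\le u-v\le(\lambda d/(\nu(d+1)))\sup g(u)$, so $u-v\to 0$ uniformly and $(u_{\lambda,\nu},v_{\lambda,\nu})\to(u_\infty,u_\infty)$ solving \eqref{eq:Pinf}. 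Applying Theorem A to these scalar limits excludes $\lambda>\Lambda^*$ in the first regime and $\lambda>\Lambda^*(1+d)$ in the second, yielding $\lim_{\nu\to 0}\lambda^*(\nu)=\Lambda^*$ and $\lim_{\nu\to\infty}\lambda^*(\nu)=\Lambda^*(1+d)$.
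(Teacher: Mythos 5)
Your parts (i)--(iii) follow essentially the paper's route (monotone iteration from the super-solution $(w_\Lambda,w_\Lambda)$, testing against $\phi_1$, superlinearity of $g$ for the a priori bound on $\int g(u_\lambda)\phi_1$), and your decomposition into $W=u+dv$ and $z=u-v$ is exactly the algebraic decoupling the paper uses in \eqref{e-gamma} and \eqref{eq:monot3}. In fact your observation that $W/(1+d)$ is a bounded super-solution of \eqref{eq:GS} at parameter $\lambda/(1+d)$, giving $\lambda_\nu^*\le\Lambda^*(1+d)$ for \emph{all} $\nu$, is cleaner than the paper's eigenvalue bound \eqref{no-weak}, and your $\nu\to\infty$ estimate $0\le u-v\le \frac{\lambda d}{\nu(1+d)}\sup g(u)$ from the $z$-equation is more elementary than the paper's operator argument with $\K_\nu$.

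The genuine gap is the monotonicity in $\nu$, which you correctly identify as the hard step but then resolve incorrectly. The linearized system for $(\dot u,\dot v)=\partial_\nu(u_{\lambda,\nu},v_{\lambda,\nu})$ is cooperative, but its source vector is $(v-u,\,u-v)=(-z,z)$ with $z\ge0$: the two components have \emph{opposite} signs. The maximum principle for cooperative systems (Krein--Rutman / positivity of the resolvent when the principal eigenvalue is positive) only converts a sign-definite source vector into a sign-definite solution vector; it says nothing about a mixed-sign source, and the conclusion you want, $\dot u\le 0\le\dot v$, is not of the form it can deliver. (Writing the solution operator as a matrix $(G_{ij})$ of positive kernels, you would need $G_{12}z\le G_{11}z$, a comparison between off-diagonal and diagonal Green operators that requires its own proof.) Moreover, differentiability of the minimal branch in $\nu$ presupposes invertibility of the linearization, which you have not established, and your continuation argument for the monotonicity of $\lambda^*(\nu)$ presupposes existence at $\nu>\nu_1$, which is what is to be proved. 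The paper avoids all of this by proving $\phi_k>\tilde\phi_k$ by induction on the iteration \eqref{eq:phipskik} itself, using that $-\Delta(\phi_k+d\psi_k)=\lambda g(\phi_{k-1})$ depends on $\nu$ only through $\phi_{k-1}$ together with $\phi_k>\psi_k$; this simultaneously yields existence at larger $\nu$ and the ordering of the limits. A secondary gap: in the $\nu\to\infty$ regime the monotone limit $\tilde u_\infty$ of $u_{\lambda,\nu}$ satisfies $\tilde u_\infty\ge u_\infty$ by minimality, and nothing in your sketch forces the reverse inequality, so identifying the limit as the \emph{minimal} solution of \eqref{eq:Pinf} still requires the stability argument (invertibility of $-\Delta-\frac{\lambda}{1+d}g'(\xi_\nu)$) that the paper supplies in Lemma \ref{stable-nu}; your squeeze only works in the $\nu\to0$ direction, where $u_{\lambda,\nu}\le u_0$ pins the limit from the correct side.
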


\begin{remark}
The limit weak solution $(u_\nu^*,v_\nu^*)$, constructed in $(iii)$
might be either classical or singular.
In the proof of part $(i)$ of Theorem \ref{t:2} we show that
there exists $\lambda_\nu^{**}>\lambda_\nu^*$ such that
system \eqref{eq:P} has no weak solutions for $\lambda>\lambda_\nu^{**}$.
In the case of the single equation \eqref{eq:GS} it is known that actually $\lambda^{**}=\lambda^*$,
see $(i)$ and $(ii)$ of Theorem A.
One may expect that a similar result holds for the system considered in this paper.
However, the proof  of this fact is very delicate even in the case of a single equation,
see \cite[Theorem 3]{Brezis-96} and further discussion in \cite{Brezis-97}.
We also want to point out that in the case of single equation \eqref{eq:GS}
weak solutions corresponding to $\lambda=\lambda^*$ in most of the
cases relevant to applications are in fact classical.
\end{remark}

The Theorem \ref{t:2}  proves that solutions for the problem
\eqref{eq:P}  behave similarly to solutions of the classical Gelfand problem
and that the presence of the heat exchange increases the value of critical
parameter $\lambda^*$. These results are quite in line with the physical
intuition behind this problem. What we found  rather surprising is
the limiting behavior of solutions of \eqref{eq:P} when $\nu\to \infty$.
Indeed, it is quite remarkable that the  substantial heat exchange between
the two phases reduces  problem \eqref{eq:P}  to the classical Gelfand problem
with re-normalized parameters. We also note that this observation in fact also justifies the
use of single temperature model as effective models for two phase materials
in this asymptotic regime.

The paper is organized as follows: in section 2 we give some basic heuristic arguments and present numerical examples
which clarify the  main results. Sections 3 and 4 are dedicated to the proof of Theorems
\ref{t:2} and \ref{t:1} respectively.

\section{Heuristic arguments and numerical example}\label{sect:Heur}

In this section we would like to  give some formal arguments  and provide  results of numerical simulations of problems \eqref{eq:PT},
\eqref{eq:P} that clarify and illustrate results of
Theorems \ref{t:1} and \ref{t:2}.

Theorem \ref{t:1}  basically states that the presence or absence of global solutions
for  problem \eqref{eq:PT}
is fully determined by the presence or absence of  solutions for  system \eqref{eq:P}.
Thus, the behavior of solutions for   system \eqref{eq:PT} is essentially similar to the behavior
of solutions for single equation \eqref{eq:G}. As it is well known, the dynamics of
a system of parabolic equations, in  general,  is substantially more complex than the
one of a single equation. However, in our case the system \eqref{eq:G} is, at least formally,
can be written as a gradient flow
\begin{eqnarray}
U_t=-\frac{\delta E}{\delta U}, \quad \alpha V_t=-\frac{\delta E}{\delta V}
\end{eqnarray}
with the "energy" functional defined as
\begin{eqnarray}\label{energy}
{\mathcal E}(U,V)=\frac12 \int_{\Omega}\left[ |\nabla U|^2+d|\nabla V|^2+\nu(U-V)^2+2\lambda\Phi(U) \right]dx,
\quad  \Phi(U)=-\int_0^U g(s)ds.
\end{eqnarray}
Moreover, the system \eqref{eq:PT} is  {\em quasi-monotone}  (quasi-monotone nondecreasing in terminology of \cite{Pao}) and thus its classical solutions obey the component-wise parabolic comparison principle \cite{Pao}*{Theorem 3.1, p. 393} or \cite{smith}*{Theorem 3.4, p.130}. Thus the time evolution for solutions of \eqref{eq:PT} is very much restricted
and indeed expected to be similar to the one of a single equation.
This situation is somewhat similar to the one considered in \cite{PG10} where
self explosion in confined porous media was considered.

Now let us turn to Theorem \ref{t:2}.  First we note that transition from existence to non-existence
of solution in this problem is very similar to the one observed in the classical Gelfand problem.
This is again occurs due to the presence of a component-wise comparison principle
and the fact that system \eqref{eq:P} is the Euler-Lagrange equation of the functional
\begin{eqnarray}
{\mathcal E}(u,v)=\frac12 \int_{\Omega}\left[ |\nabla u|^2+d|\nabla v|^2+\nu(u-v)^2+2\lambda\Phi(u) \right]dx,
\quad  \Phi(u)=-\int_0^u g(s)ds.
\end{eqnarray}
In order to understand monotonicity with respect to  parameter $\nu$ it is convenient to rewrite
 system \eqref{eq:P} as a non-local equation.
Combining the first and the second equations of the system \eqref{eq:P} we have
\begin{eqnarray}\label{e-gamma}
[-\gamma\nu^{-1} \Delta+1](u-v)=\lambda\gamma\nu^{-1}g(u)
\end{eqnarray}
where $\gamma=\frac{d}{1+d}$.
Thus,
\begin{eqnarray}\label{int-rep-v}
u-v=\lambda\gamma\nu^{-1}[-\gamma\nu^{-1}\Delta+1]^{-1}g(u).
\end{eqnarray}
Substituting this expression in the first equation of the system \eqref{eq:P}
yields
\begin{eqnarray}\label{int-rep}
-\Delta u=\frac{\lambda}{1+d}\left(1+d\left\{1-[-\gamma\nu^{-1}\Delta +1]^{-1}]\right\}\right) g(u).
\end{eqnarray}
As it is readily seen the operator in a curly brackets is an increasing function of $\nu^{-1}$.
This implies that the effective right hand side of this equation is a decreasing function of  $\nu$ and thus one may expect that
$u$ decreases as $\nu$ increases.
Moreover, in the limiting case $\nu\to\infty$ the right hand side becomes essentially local.
Let us also note that for sufficiently large $\nu$ the components $u$ and $v$ of the system
\eqref{eq:P} away from the boundary are related (at least formally)  by a  simple formula
$v=u -\nu^{-1}\lambda d g(u)/(1+d)+o(\nu^{-1})$  which follows directly from \eqref{int-rep-v}.


\begin{figure}[h]
  \centering
  \includegraphics[width=3.25in]{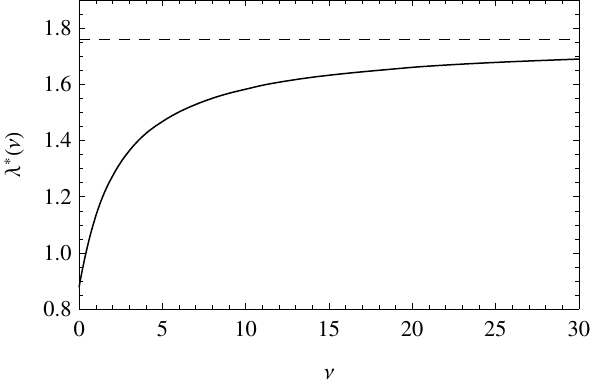}
  \caption{The critical value  $\lambda^*$ as a function of $\nu$ for solution of \eqref{eq:P}  with $\Omega=(-1,1)$, $g(u)=e^u$ and $d=1$. Dashed line represents $\lambda^*(\infty)\approx 1.76$.}
  \label{f:lnu}
\end{figure}
\begin{figure}[h]
  \centering
  \includegraphics[width=6.0in]{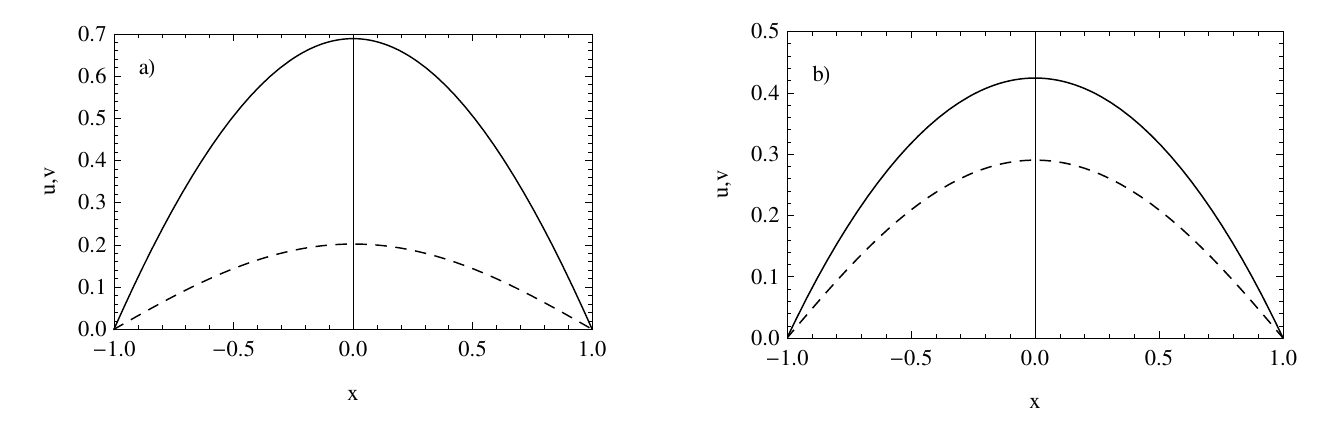}
  \caption{Minimal solution of \eqref{eq:P}  with $\Omega=(-1,1)$ and $g(u)=e^u$, $d=1$ and $\lambda=1$ with $\nu=1$ (fig a) and $\nu=5$ (fig b). $u$ is solid line, $v$ is dashed line. }
  \label{f:uv}
\end{figure}
\begin{figure}[h]
\noindent \begin{minipage}{.4\textwidth}
\includegraphics[width=3.in]{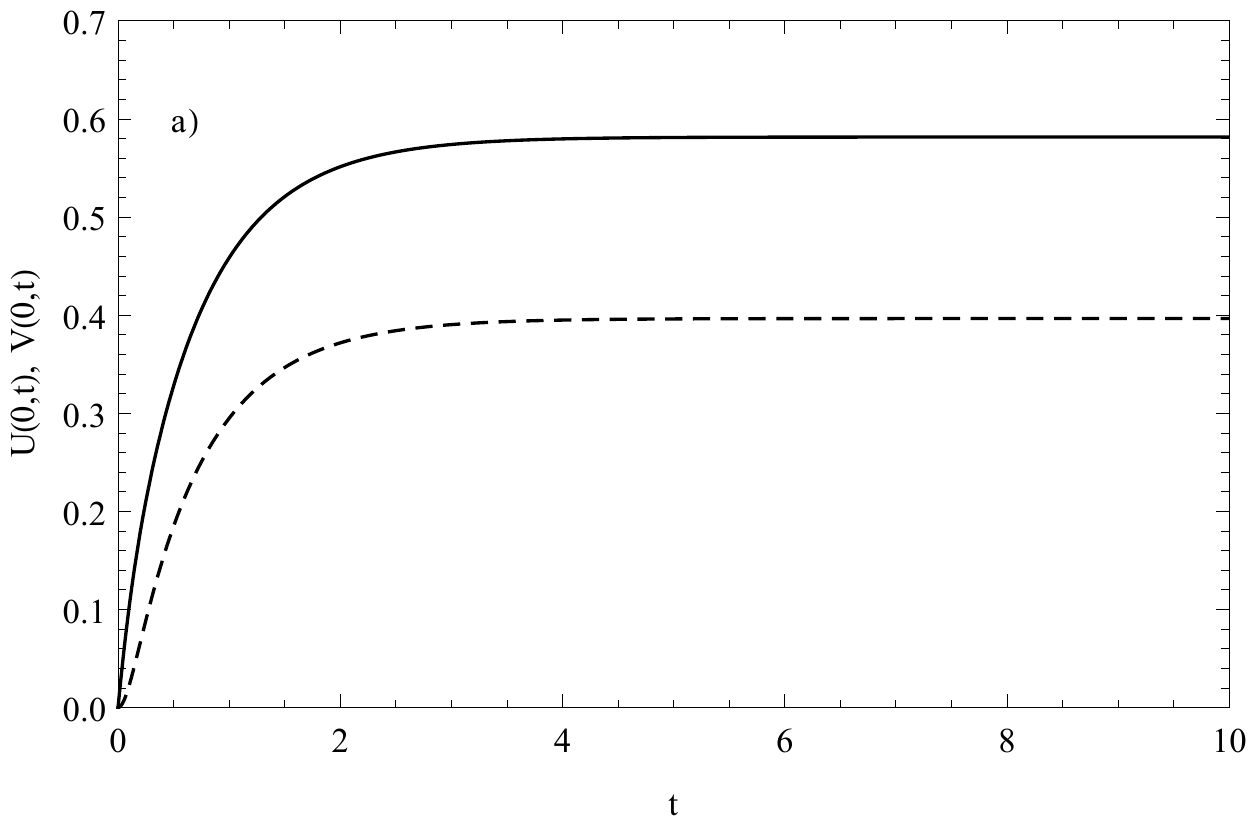}
\end{minipage}
\hspace{1.5cm}
\noindent\begin{minipage}{.4\textwidth}
\includegraphics[width=3.in]{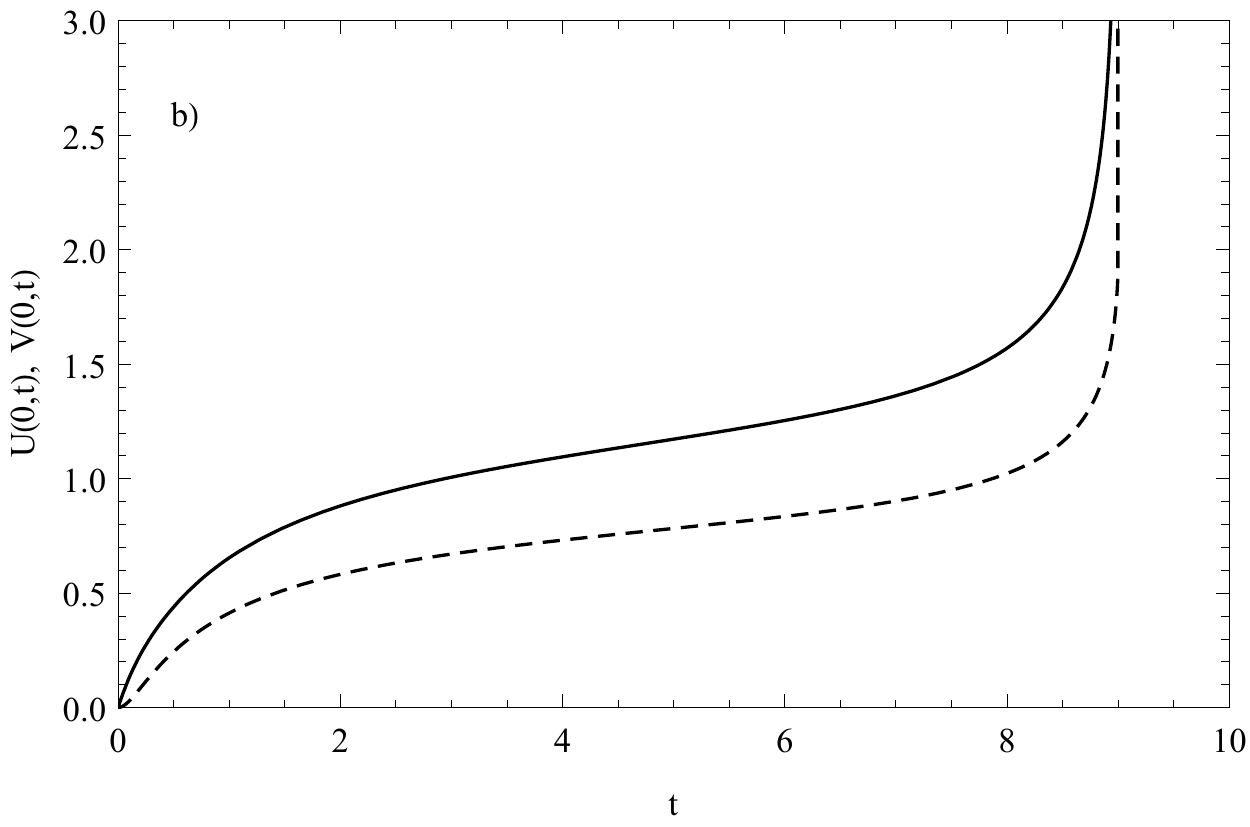}
\end{minipage}
\caption{Solution of  \eqref{eq:PT}  with $\Omega=(-1,1)$ and $g(u)=e^u$, $d=1$, $\alpha=1$,
$\nu=5$ and $\lambda=1.2$ (a) and $\lambda=1.5$ (b) in the middle of an interval $\Omega$ where solution has its maximum  value as long as it exists. $U(0,t)$ is solid line, $V(0,t)$ is dashed line. }
  \label{f:uvt}
\end{figure}

In order to illustrate statements of Theorems \ref{t:1}, \ref{t:2}  we performed numerical studies of a simple one dimensional
versions of  problems \eqref{eq:PT}, \eqref{eq:P} with $\Omega=(-1,1)$, $d=\alpha=1$ and $g(u)=e^u$.
Physically these two problems describe stationary temperature distributions and evolution of temperature fields in
plane-parallel vessel under assumption of Arrhenius chemical kinetics.

Let us first consider the stationary problem \eqref{eq:P}. Solution of this problem was obtained numerically using the
conventional shooting method. The numerical study shows that, in a full agreement with
the  statement of Theorem \ref{t:2},  stationary temperature distribution exists only for
values of scaling parameter $\lambda(\nu)$ which does not exceed some critical value $\lambda^*(\nu)$.
Moreover, this critical value $\lambda^*(\nu) $ is an increasing function of the heat exchange parameter $\nu$ and has  following asymptotic properties:
$\lambda^*(0)=\Lambda^*$,  where $\Lambda^*$ is a critical value of the classical Gelfand problem
\eqref{eq:GS} (in the considered case $\Lambda^*\approx 0.88,$ see e.g \cite{FK}) and $\lambda^*(\infty)=(1+d) \Lambda^*\approx 1.76$.  The dependency of  the critical value $\lambda^*$ as a function of $\nu$ is shown on Figure \ref{f:lnu}.
In addition   for a fixed value of $\lambda<\lambda^* $  one can see that $u$ component describing the temperature of the solid phase is
decreasing monotonically as $\nu$ increases, while  the temperature of the gas phase ($v$ component) is bounded from above by $u$
and approaches to the latter from below as $\nu$ increases see Figure \ref{f:uv}.

Finally, let us  illustrate dynamical features of the combustion process given  by the theorem \ref{t:1}. For this reason    let us consider a  one dimensional problem \eqref{eq:PT} with all the parameters as above and
for a fixed value of $\nu=5$ and two values of $\lambda=1.2$ and $\lambda=1.5$, one of
which is below critical $\lambda^*(5)\approx1.468$ and the other is above critical.
Figure \ref{f:uvt} shows the time evolution of temperatures of gas and solid phases in the middle of the vessel $x=0$ where both temperatures
of the solid and the gas have its maximal values as long as the solution exists.  As predicted by Theorem \ref{t:1} in case of sub-critical $\lambda$  (Figure \ref{f:uvt}a ) the solution, after some short transition period, approaches to
its steady state, whereas for supercritical $\lambda$ (Figure \ref{f:uvt} b) the solution  rapidly accelerates  and
becomes infinite (blows up) in finite time.


\section{Stationary problem: Proof of Theorem \ref{t:2}}\label{Elliptic}

In this section we study solutions of the stationary system \eqref{eq:P} and discuss their qualitative properties.

First, let us note that if $\lambda=0$ then system \eqref{eq:P} becomes  linear
and has a cooperative structure for all $d,\nu>0$.
Further, since $g(u)$ is a monotone non-decreasing function, \eqref{eq:P} is a quasi-monotone non-decreasing nonlinear system
in the sense of \cite{Pao}*{Theorem 4.1, p.406} for every $d,\nu,\lambda>0$ and thus can be studied using comparison type of arguments.

We start with definitions of weak solution of problem \eqref{eq:P} as well as weak sub and super-solutions for this system.

Similarly to \cite{Brezis-96}, we say $(u,v)$ is a {\em weak} solution of system \eqref{eq:P} if $u,v\in L^1(\Omega)$,
$g(u)\delta(x)\in L^1(\Omega)$, where $\delta(x):=\mathrm{dist}(x,\partial\Omega)$,
and
\begin{eqnarray}\label{eq:weak_sol}
&&-\int_\Omega u\Delta\phi+\nu\int_\Omega u\phi-\nu\int_\Omega v\phi=\lambda\int_\Omega g(u)\phi, \nonumber\\
&&-d\int_\Omega v\Delta\psi+\nu\int_\Omega v\psi-\nu\int_\Omega u\psi=0, \qquad  \forall \phi,\psi\in C^2_0(\overline\Omega).
\end{eqnarray}
Note that the assumption $\varphi\in C^2_0(\overline\Omega)$ implies $|\phi|\le C\delta$ for some constant $C>0$,
so the integral on the right hand side of the 1st equation is well--defined.
Note also that zero boundary data are encoded in this definition since we allow test functions $\phi$, $\psi$ which have a nontrivial normal derivative on the boundary.

We say $(u,v)$ is a {\em classical} solution of \eqref{eq:P} if $(u,v)$ is a weak solution of \eqref{eq:P} and in addition,
$u,v\in C^2(\Omega)\cap C_0(\overline\Omega)$.
As usual, $(u,v)$ is a {\em sub} or {\em super}-solution of system \eqref{eq:P} if $=$ above is replaced by $\le$ or $\ge$, respectively,
and in addition only non-negative test functions $\phi$ and $\psi$ are considered.

Given two pairs of functions $(u_1,v_1)$ and $(u_2,v_2)$ defined on $\Omega$, we write $(u_1,v_1)\le(u_2,v_2)$ provided that $u_1(x)\le u_2(x)$ and $v_1(x)\le v_2(x)$ for all $x\in\Omega$.
We say that $(u,v)$ is a {\em minimal} (super-)\,solution of \eqref{eq:P},
if $(u,v)$ is a (super-)\,solution of \eqref{eq:P} and $(u,v)\le(\tilde u,\tilde v)$ for every other super-solution $(\tilde u,\tilde v)$ of \eqref{eq:P}.

Using these definitions we now can proceed to a proof of Theorem \ref{t:2}.
The proof of parts $(i-iii)$ are relatively standard and can be viewed as an extension  of similar results of \cites{Brezis-96,Brezis-97,Cazenave} obtained for single equation, to the system of equations of considered class.
For completeness  we sketch the main steps of the proof of $(i-iii)$.
The proofs of parts $(iv)$ and $(v)$ of Theorem \ref{t:2} are new and will be given in details.
We start with the proof of part $(i)$ of Theorem \ref{t:2}.

\medskip

\begin{proof}
[Proof of part (i) of Theorem \ref{t:2}]
Until the proof of part $(iv)$ of Theorem \ref{t:2} we assume that $\nu>0$ is fixed and when there is no ambiguity,
drop the subscript $\nu$ in the notations.

Let $\mu_1=\mu_1(-\Delta,\Omega)>0$ and $\phi_1> 0$ be the principal eigenvalue
and the corresponding principal eigenfunction of $-\Delta$ in $H_0^1(\Omega)$ with $||\phi||_1=1$.
Recall that since $\Omega$ is smooth,
\begin{equation}\label{phi-delta}
c\delta\le\phi_1\le C\delta,
\end{equation}
for some $C>c>0$, cf. \cite{Cazenave}*{Theorems 3.1.4 and 4.3.1}.

Given $\lambda>0$, let $(u_\lambda,v_\lambda)$ be a weak solution of \eqref{eq:P}.
Testing \eqref{eq:P} against $\phi_1$, in the 2nd equation we obtain
\begin{eqnarray}
\mu_1 d\int_\Omega v_\lambda\phi_1+\nu\int_\Omega v_\lambda\phi_1-\nu\int_\Omega u_\lambda\phi_1=0,
\end{eqnarray}
or
\begin{equation}\label{v-test}
\int_\Omega v_\lambda\phi_1=\frac{\nu}{\nu+\mu_1 d}\int_\Omega u_\lambda\phi_1.
\end{equation}
Substituting into the 1st equation we then derive
\begin{equation}\label{u-test}
\mu_1(1+\kappa)\int_\Omega u_\lambda\phi_1=
\mu_1\int_\Omega u_\lambda\phi_1+\nu\int_\Omega u_\lambda\phi_1-\nu\int_\Omega v_\lambda\phi_1=\lambda\int_\Omega g(u_\lambda)\phi_1,
\end{equation}
where $\kappa=\frac{\nu\,d}{\nu+\mu_1 d}$.
Since $g(0)>0$ and $g$ is convex, by assumption \eqref{eq:g} there is a constant $\eta>0$ such that
\begin{eqnarray}
g(u)\ge \eta u\quad\text{for all}\quad u\ge 0.
\end{eqnarray}
Then from \eqref{u-test} we conclude that
\begin{equation}\label{u-test-2}
\mu_1(1+\kappa)\int_\Omega u_\lambda\phi_1=\lambda\int_\Omega g(u_\lambda)\phi_1
\ge
\lambda\eta\int_\Omega u_\lambda\phi_1.
\end{equation}
This implies that for
\begin{equation}\label{no-weak}
\lambda>\frac{\mu_1}{\eta}\big(1+\kappa)
\end{equation}
system \eqref{eq:P} has no weak solutions.
\end{proof}

\medskip

To prove part $(ii)$ of Theorem \ref{t:2} we need two following lemmas.

\medskip

\begin{lemma}\label{l:new}
Let $(\phi,\psi)$ be a solution of the following problem
\begin{equation}\label{lin-coop}
\left\{
\begin{array}{rcl}
-\Delta \phi+\nu \phi -\nu \psi &=& f\quad\text{in $\Omega$},\\
-d\Delta \psi + \nu \psi - \nu \phi &=& 0\quad\text{in $\Omega$},\\
\phi = \psi &=& 0\quad\text{on $\partial\Omega$}.\\
\end{array}
\right.
\end{equation}
Then,  for every $f\in C(\bar\Omega)$ system \eqref{lin-coop} has unique classical solution $(\phi,\psi)$.
Moreover, $(\phi,\psi)\ge(0,0)$ provided that $f\ge 0$.
In addition, classical solutions of \eqref{lin-coop} satisfy a strong maximum principle, in the sense that
$f\ge 0$ and $f\neq 0$ implies that for some $c,C>0$ it holds
\begin{equation}\label{strong-max}
c\delta(x)\le\psi(x)<\phi(x)\le C\delta(x)\quad\text{for all $x\in\Omega$.}
\end{equation}
\end{lemma}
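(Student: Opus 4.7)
The plan is to handle the three assertions separately, leveraging the cooperative structure of the system and, crucially, a scalar reduction for the difference $\phi-\psi$ that turns the strong maximum principle into an application of the standard scalar one.

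For existence and uniqueness, I would observe that \eqref{lin-coop} is the Euler--Lagrange system of the strictly convex quadratic functional
$$\mathcal{J}(\phi,\psi)=\tfrac{1}{2}\int_\Omega\bigl(|\nabla\phi|^2+d|\nabla\psi|^2+\nu(\phi-\psi)^2\bigr)\,dx-\int_\Omega f\phi\,dx$$
on $H_0^1(\Omega)\times H_0^1(\Omega)$. The associated bilinear form is coercive because the two gradient terms already give coercivity through Poincar\'e, while the coupling term $\nu(\phi-\psi)^2$ is non-negative; Lax--Milgram therefore produces a unique weak solution. Regularity is then bootstrapped in a triangular fashion: I treat the second equation as a scalar equation for $\psi$ with datum $\nu\phi\in L^p$, apply $L^p$ theory and Schauder estimates to promote $\psi$ to $C^{2}(\Omega)\cap C_0(\overline\Omega)$, then do the same for $\phi$ from the first equation using $f\in C(\overline\Omega)$.

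For the nonnegativity claim, given $f\ge 0$, I would test the first equation against $-\phi^-$ and the second against $-\psi^-$, where $u^-:=\max(-u,0)\in H_0^1(\Omega)$. After integration by parts, the diagonal diffusion and reaction terms contribute exactly $\int|\nabla\phi^-|^2+\nu\int(\phi^-)^2$ and $d\int|\nabla\psi^-|^2+\nu\int(\psi^-)^2$, respectively; the $f$-term has the favorable sign; and the cross-coupling terms are controlled by $-\psi\phi^-\le\psi^-\phi^-$ and symmetrically $-\phi\psi^-\le\phi^-\psi^-$. Summing the two inequalities and applying Young's inequality $2\phi^-\psi^-\le(\phi^-)^2+(\psi^-)^2$ cancels the zero-order terms and leaves $\int|\nabla\phi^-|^2+d\int|\nabla\psi^-|^2\le 0$, forcing $\phi^-=\psi^-=0$.

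The key step for the strong maximum principle is the observation that $w:=\phi-\psi$ decouples: subtracting $d^{-1}$ times the second equation from the first yields the scalar Dirichlet problem
$$-\Delta w+\nu\,\tfrac{1+d}{d}\,w=f\quad\text{in }\Omega,\qquad w=0\quad\text{on }\partial\Omega.$$
Since $f\ge 0$ and $f\not\equiv 0$, the classical scalar strong maximum principle gives $w>0$ in $\Omega$, which is precisely $\psi<\phi$. In particular $\phi\not\equiv 0$, so the second equation $-d\Delta\psi+\nu\psi=\nu\phi\ge 0$ together with the scalar strong maximum principle yields $\psi>0$ in $\Omega$; Hopf's lemma, combined with the smoothness of $\partial\Omega$ and \eqref{phi-delta}-type estimates, gives the lower bound $\psi(x)\ge c\,\delta(x)$. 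The upper bound $\phi(x)\le C\,\delta(x)$ is a consequence of $C^1(\overline\Omega)$-regularity of $\phi$ and $\phi|_{\partial\Omega}=0$.

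The main obstacle I anticipate is organizing the argument so each step is genuinely scalar. The variational and regularity steps are routine, and the nonnegativity step works only because of the specific algebraic cancellation built into Young's inequality applied to the coupling; the strong maximum principle becomes tractable only once the decoupling $w=\phi-\psi$ is noticed. Without that trick one would have to invoke abstract results on cooperative elliptic systems, whereas with it the whole lemma reduces to textbook scalar statements.
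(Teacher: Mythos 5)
Your proposal is correct, and on the one genuinely nontrivial point --- the strong maximum principle \eqref{strong-max} --- it follows exactly the paper's route: you form $w=\phi-\psi$ and observe that it solves the decoupled scalar problem $-\Delta w+\nu\frac{1+d}{d}w=f$, which is the paper's equation \eqref{eq:100} up to the positive factor $\nu\gamma^{-1}$ (your right-hand side $f$ is in fact the correct normalization; the paper's \eqref{eq:100} drops a factor $\nu^{-1}$ relative to \eqref{e-gamma}, harmlessly). From there both arguments run the scalar strong maximum principle on $w$, feed the resulting positivity into the second equation to get $\psi\ge c\delta$, and read the upper bound for $\phi$ off the first equation. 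Where you differ is in the routine part: the paper disposes of existence, uniqueness, regularity and positivity by citing the literature on cooperative elliptic systems (de Figueiredo--Mitidieri, Sweers, Fleckinger--Hern\'andez--de Th\'elin), whereas you prove these from scratch via the coercive quadratic form $\frac12\int\bigl(|\nabla\phi|^2+d|\nabla\psi|^2+\nu(\phi-\psi)^2\bigr)-\int f\phi$, Lax--Milgram, triangular elliptic bootstrap, and a negative-part test combined with Young's inequality for positivity. Your energy computation does close correctly (the cross terms $2\nu\int\phi^-\psi^-$ are exactly absorbed by the zero-order terms $\nu\int(\phi^-)^2+(\psi^-)^2$, leaving only the gradient terms), so this buys a self-contained and elementary proof at the cost of some length; the paper's citations buy brevity and, implicitly, the general framework for cooperative systems that is reused later (e.g.\ in Lemma \ref{stable-nu} via Sweers' principal eigenvalue theorem). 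One small point of bookkeeping: your deduction ``$w>0$ hence $\phi\not\equiv 0$'' silently uses the already-established $\psi\ge 0$, so the positivity step must precede the maximum-principle step in the final write-up, as indeed it does in your outline.
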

\begin{proof}
The  existence and uniqueness as well as the regularity and positivity properties for systems of type \eqref{lin-coop}
follow from well known results of \cites{Mitidieri-86,Sweers-92,Fleckinger-95}.

To prove \eqref{strong-max} we observe that combining   the first and the second equations of \eqref{lin-coop} in
a way identical to the one discussed in the section \ref{sect:Heur} (see Eq. \eqref{e-gamma}) we obtain
\begin{eqnarray}\label{eq:100}
[-\gamma\nu^{-1} \Delta+1](\phi-\psi)=\gamma f,
\end{eqnarray}
where $\gamma=\frac{d}{1+d}>0$. By the strong maximum principle  (cf.\ \cite{Cazenave}*{Theorem 3.1.4})
applied to \eqref{eq:100}
we have  $\phi(x)-\psi(x)>c_1\delta(x)$ for all $x\in\Omega$, for some constant $c_1$. Substituting this into the 2nd equation of the system \eqref{lin-coop} and using the strong maximum principle again we obtain the lower bound in \eqref{strong-max}, while from the 1st equation of \eqref{lin-coop} we derive the upper bound of \eqref{strong-max}
via \cite{Cazenave}*{Theorem 4.3.1}.
\end{proof}

\medskip

\begin{lemma}\label{l-super}
Assume that for some $\lambda_*>0$ system \eqref{eq:P} has a classical supersolution.
Then \eqref{eq:P} has a minimal classical solution $(u_\lambda,v_\lambda)$ for every $0<\lambda\le\lambda_*$,
\end{lemma}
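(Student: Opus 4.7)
The plan is to construct the minimal classical solution via a monotone iteration scheme, using the linear cooperative system of Lemma \ref{l:new} at every step, together with the trivial subsolution $(\underline u,\underline v)=(0,0)$ and the given classical supersolution $(\bar u,\bar v)$ at $\lambda=\lambda_*$.

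First I would fix $0<\lambda\le\lambda_*$ and observe that $(\bar u,\bar v)$ is still a supersolution at this smaller $\lambda$, since $g\ge 0$ and the only $\lambda$-dependent term appears on the right-hand side of the first equation. Next, since $g(0)>0$, the pair $(0,0)$ satisfies both inequalities in the supersolution definition with the reverse sign, so it is a classical subsolution. I then define inductively $(u_0,v_0):=(0,0)$ and, given $(u_n,v_n)\in C(\overline\Omega)^2$, take $(u_{n+1},v_{n+1})$ to be the unique classical solution (furnished by Lemma \ref{l:new}) of the linear cooperative system
\begin{equation*}
\left\{
\begin{array}{rcl}
-\Delta u_{n+1}+\nu u_{n+1}-\nu v_{n+1} &=& \lambda g(u_n)\quad\text{in $\Omega$},\\
-d\Delta v_{n+1}+\nu v_{n+1}-\nu u_{n+1} &=& 0\quad\text{in $\Omega$},\\
u_{n+1}=v_{n+1} &=& 0\quad\text{on $\partial\Omega$}.
\end{array}
\right.
\end{equation*}
Since $g(u_n)\in C(\overline\Omega)$, the iteration is well-defined.

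The heart of the argument is monotonicity and an a priori upper bound, both obtained by applying Lemma \ref{l:new} to suitable differences. For monotonicity, I would show inductively that $u_n\le u_{n+1}$ and $v_n\le v_{n+1}$: the pair $(u_{n+1}-u_n,v_{n+1}-v_n)$ solves the system of Lemma \ref{l:new} with right-hand side $\lambda(g(u_n)-g(u_{n-1}))\ge 0$ (using that $g$ is nondecreasing and the previous inductive step), so positivity follows. The base case $u_1\ge 0$, $v_1\ge 0$ is immediate since $\lambda g(0)>0$. For the upper bound, I would argue inductively that $u_n\le\bar u$, $v_n\le\bar v$: indeed $(\bar u-u_{n+1},\bar v-v_{n+1})$ satisfies the system of Lemma \ref{l:new} with nonnegative right-hand side $\lambda(g(\bar u)-g(u_n))\ge 0$ on the first equation and nonpositive data is not an issue on the second, so again positivity of the pair follows. (The supersolution inequalities and the inductive hypothesis $(u_n,v_n)\le(\bar u,\bar v)$ are used here; the zero boundary conditions for the difference hold because both terms vanish on $\partial\Omega$.)

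Having a pointwise monotone sequence bounded above by $(\bar u,\bar v)\in C_0(\overline\Omega)^2$, I would let $(u_\lambda,v_\lambda):=\lim_n(u_n,v_n)$. Monotone convergence gives $g(u_n)\to g(u_\lambda)$ in $L^1(\Omega)$, and dominated convergence upgrades this to $L^p(\Omega)$ for every $p<\infty$. Standard elliptic $L^p$ estimates applied to the linear system of Lemma \ref{l:new} give $(u_n,v_n)$ bounded in $W^{2,p}(\Omega)^2$ for large $p$, and then Schauder estimates on the limit system yield $(u_\lambda,v_\lambda)\in C^2(\Omega)\cap C_0(\overline\Omega)^2$, a classical solution of \eqref{eq:P}. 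Strict positivity follows from Lemma \ref{l:new}'s strong maximum principle since the right-hand side $\lambda g(u_\lambda)$ is strictly positive. Minimality is then automatic: the very same inductive comparison shows that any other classical (in fact any weak) supersolution $(\tilde u,\tilde v)$ of \eqref{eq:P} at parameter $\lambda$ dominates every $(u_n,v_n)$, hence $(u_\lambda,v_\lambda)\le(\tilde u,\tilde v)$.

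I expect the technical obstacle to be the comparison step for the upper bound, because one must be careful that the sub/supersolution inequalities for $(\bar u,\bar v)$ and the equations for $(u_{n+1},v_{n+1})$ combine so that the differences $(\bar u-u_{n+1},\bar v-v_{n+1})$ genuinely satisfy the hypotheses of Lemma \ref{l:new} (in particular nonnegative forcing and zero boundary data). The remaining passage to the limit and bootstrap to classical regularity is routine provided $g\in C^1$, which is part of the standing hypotheses.
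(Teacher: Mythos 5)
Your proposal is correct and follows essentially the same route as the paper: the same monotone iteration starting from $(0,0)$, with each step solved by the linear cooperative system of Lemma \ref{l:new}, the same comparison of successive differences and of the iterates against the supersolution, and the same passage to the limit by elliptic regularity, with minimality obtained because the construction is independent of the chosen supersolution. The only difference is that you spell out the $L^p$/Schauder bootstrap that the paper delegates to a citation.
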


\begin{proof}
Let us first observe  that if $(\bar u,\bar v)$ is a classical super-solution of \eqref{eq:P} for some $\lambda_*>0$,
then $(\bar u,\bar v)$ is also a classical super-solution of \eqref{eq:P} for every $0<\lambda\le\lambda_*$.

Next, given $0<\lambda\le\lambda_*$, set $(\phi_0,\psi_0)=(0,0)$. For $k\in\N$, recursively define $(\phi_k,\psi_k)$ as the
unique positive solution of the linear system
\begin{eqnarray}\label{eq:phipskik}
\left\{
\begin{array}{rcl}
-\Delta \phi_k+\nu \phi_k -\nu \psi_k &=& \lambda g(\phi_{k-1})\quad\text{in $\Omega$},\\
-d\Delta \psi_k + \nu \psi_k - \nu \phi_k &=& 0\quad\text{in $\Omega$},\\
\phi_k = \psi_k &=& 0\quad\text{on $\partial\Omega$}.\\
\end{array}
\right.
\end{eqnarray}
By Lemma \ref{l:new} it is clear that
\begin{eqnarray}
0\le \phi_{1}(x)\le \bar u(x),\qquad 0\le \psi_{1}(x)\le \bar v(x)\qquad(x\in\Omega).
\end{eqnarray}
Assume that for some $k\in\N$ it holds
\begin{eqnarray}
0\le \phi_{k-1}(x)\le \phi_{k}(x)\le \bar u(x),\qquad 0\le \psi_{k-1}(x)< \psi_{k}(x)\le \bar v(x)\qquad(x\in\Omega).
\end{eqnarray}
Then, taking into account monotonicity of $g$, we obtain
\begin{eqnarray}\label{eq:phipskik-1}
\left\{
\begin{array}{rcl}
-\Delta (\phi_{k+1}-\phi_k)+\nu (\phi_{k+1}-\phi_k) -\nu(\psi_{k+1}-\psi_k) &=&
\lambda\big(g(\phi_{k})-g(\phi_{k-1})\big)\ge 0\quad\text{in $\Omega$},\\
-d\Delta (\psi_{k+1}-\psi_k) + \nu(\psi_{k+1}-\psi_k) - \nu (\phi_{k+1}-\phi_k) &=& 0\quad\text{in $\Omega$},\\
\phi_{k+1}-\phi_k=\psi_{k+1}-\psi_k &=& 0\quad\text{on $\partial\Omega$}.\\
\end{array}
\right.
\end{eqnarray}
By Lemma \ref{l:new} and the principle of mathematical induction we conclude that
the sequence $(\phi_k,\psi_k)$ is monotone non-decreasing. Similarly,
we deduce $(\phi_k,\psi_k)$ is uniformly bounded by $(\bar u,\bar v)$, so that for all $k\in\N$
it holds
\begin{eqnarray}
0\le \phi_{k}(x)\le \phi_{k+1}(x)\le \bar u(x),\qquad 0\le \psi_{k}(x)\le \psi_{k+1}(x)\le \bar v(x)\qquad(x\in\Omega).
\end{eqnarray}
Therefore, the sequence $(\phi_k,\psi_k)$ converge pointwisely in $\Omega$, and we denote
\begin{eqnarray}
u_\lambda(x):=\lim_{k\to\infty}\phi_k(x)\le \bar u(x),\qquad v_\lambda(x):=\lim_{k\to\infty}\psi_k(x)\le \bar v(x)\qquad(x\in\Omega).
\end{eqnarray}
 By the standard elliptic regularity (cf.\ \cite{Cazenave}*{Proof of Theorem 3.3.3, Step 3}),
 $(u_\lambda,v_\lambda)$ is a classical solution
of the nonlinear system \eqref{eq:P}. Moreover, since the construction of $(u_\lambda,v_\lambda)$
does not depend on the specific choice of a super-solution $(\bar u,\bar v)$,
we conclude that $(u_\lambda,v_\lambda)$ is a minimal solution of \eqref{eq:P}.
\end{proof}
\medskip

Now we turn to the proof of part $(ii)$ of Theorem \ref{t:2}.

\medskip

\begin{proof}[Proof of part (ii) of Theorem \ref{t:2}]
Let $\Lambda^*$ be the critical value of the classical Gelfand problem \eqref{eq:GS}.
For $0<\Lambda<\Lambda^*$, let $u_0:=w_\Lambda$ be the minimal classical solution of \eqref{eq:GS}.
Since $g$ is positive and monotone nondecreasing, it is clear that $(u_0,u_0)$
is a classical super-solution of the nonlinear system \eqref{eq:P} for every $0<\lambda\le\Lambda$.
It follows from Lemma \ref{l-super} and upper bound \eqref{no-weak} in the proof of part $(i)$ of Theorem \ref{t:2} that the set of $\lambda>0$ where \eqref{eq:P} has a minimal classical solution is a bounded, nonempty interval. Thus we defined
\begin{eqnarray}\label{eq:star}
\lambda^*:=\sup\{\lambda>0 : \text{\eqref{eq:P} has a minimal classical solution}\}.
\end{eqnarray}
This completes the proof of parts $(i)$ and $(ii)$ of Theorem \ref{t:2}.
\end{proof}

\medskip

As a next step we continue to the proof of part iii) of the main theorem.

\smallskip

\begin{proof}
[Proof of the claim iii) of Theorem \ref{t:2}]
Given $0<\lambda<\lambda^*$ and $0<\eps<\lambda$, we observe that
$(u_\lambda,v_\lambda)$ is a super-solution of (\ref{eq:P}) with $\lambda$ replaced by ${\lambda-\eps}$.
Therefore, $(u_{\lambda-\eps},v_{\lambda-\eps})\le (u_\lambda,v_\lambda)$.
Let $\phi:=u_\lambda-u_{\lambda-\eps}$, $\psi:=u_\lambda-u_{\lambda-\eps}$.
Then, taking into account the monotonicity of $g$ we see that
\begin{eqnarray}
\left\{\begin{array}{rcll}
- \Delta \phi-\nu(\psi-\phi)&=&\lambda g(u_\lambda)-(\lambda-\eps)g(u_{\lambda-\eps})> 0
& \mbox{in} \quad \Omega, \\
- d\Delta \psi-\nu (\phi-\psi)&=&0 &  \mbox{in} \quad \Omega, \\
\phi=\psi&=&0& \mbox{on} \quad \partial \Omega,
\end{array}\right.
\end{eqnarray}
By the strong maximum principle of \cite{Cazenave}*{Theorem 3.1.4} we conclude that for some $c_\eps>0$ it holds
\begin{eqnarray}
u_\lambda(x)\ge u_{\lambda-\eps}(x)+c_\eps\delta(x),\qquad v_\lambda(x)\ge v_{\lambda-\eps}(x)+c_\eps\delta(x)\qquad (x\in\Omega).
\end{eqnarray}
In particular, $u_\lambda(x)$ and $v_\lambda(x)$ are strictly monotone increasing functions of $\lambda$, for every $x\in\Omega$.

Furthermore, since $g(0)>0$ and $g$ is convex, by assumption \eqref{eq:g} there is a constant $m_*>0$ such that
for all $s\ge 0$,
\begin{eqnarray}
\frac{\lambda^*}{2}g(s)\ge \mu_1(1+\kappa) s-m_*.
\end{eqnarray}
Now, testing \eqref{eq:P} against $\phi_1$ and using \eqref{v-test} and \eqref{u-test} we obtain
\begin{eqnarray}
\lambda\int_\Omega g(u_\lambda)\phi_1=\mu_1(1+\kappa)\int_\Omega u_\lambda\phi_1
\le\frac{\lambda^*}{2}\int_\Omega g(u_\lambda)\phi_1+m_*\int\phi_1.
\end{eqnarray}
We conclude that
\begin{equation}\label{gu-bound}
\lim_{\lambda\to\lambda^*}\int_\Omega g(u_\lambda)\phi_1\le2\frac{m^*}{\lambda^*}<\infty.
\end{equation}
Let $\zeta$ be unique positive solution of the following problem
\begin{eqnarray}\label{torsion1d}
\left\{
\begin{array}{rclll}
-\Delta \zeta&=&1& \mbox{in}& \Omega, \\
\zeta&=&0& \mbox{on} &\partial \Omega.
\end{array}\right.
\end{eqnarray}
Similarly to \eqref{phi-delta}, by \cite{Cazenave}*{Theorems 3.1.4 and 4.3.1} we conclude that
\begin{equation}\label{zeta-delta}
c\delta\le\zeta\le C\delta.
\end{equation}
Testing \eqref{eq:P} with $\phi=\psi=\zeta$, we obtain
\begin{eqnarray}
&&\int_\Omega u_\lambda+\nu\int_\Omega (u_\lambda-v_\lambda)\zeta=\lambda\int_\Omega g(u_\lambda)\zeta,\nonumber \\
&& d\int_\Omega v_\lambda+\nu\int_\Omega (v_\lambda-u_\lambda)\zeta=0,
\end{eqnarray}
Adding these equations together and taking into account \eqref{phi-delta}, \eqref{zeta-delta} and \eqref{gu-bound},
we have
\begin{eqnarray}
\int_\Omega (u_\lambda+d v_\lambda)=\lambda\int_\Omega g(u_\lambda)\zeta\le c_1\lambda\int_\Omega g(u_\lambda)\phi_1<\infty.
\end{eqnarray}
In a view of positivity of $u_\lambda$ and $v_\lambda$ we conclude that $u_{\lambda}$ and $v_{\lambda}$ are bounded in $L^1(\Omega)$.

Since $u_\lambda(x)$ and $v_\lambda(x)$ are both increasing in $\lambda$,
we conclude that $(u_\lambda,v_\lambda)$ converge to $(u^*,v^*)$ in $L^1(\Omega)$,
and $g(u_\lambda)$ converge to $g(u_*)$ in $L^1(\Omega,\delta(x)dx)$.
Similarly to \cite{Brezis-96}*{Lemma 5}, it follows that $(u^*,v^*)$ is a weak solution of \eqref{eq:P} with $\lambda=\lambda^*$.
\end{proof}

\smallskip
We now proceed to the proof of the final two parts of Theorem \ref{t:2}.
First, we will study monotonicity properties of the minimal solutions $u_{\lambda,\nu}$ and $v_{\lambda,\nu}$ with respect to $\nu$, then we consider the limiting behavior of the solutions as $\nu\to 0$,
and finally limiting behavior of the solutions as $\nu\to \infty$.
\medskip

The following lemma establishes monotonicity of $u$ component with respect to parameter $\nu$.

\medskip

\begin{lemma}\label{l:monot}
$\lambda_\nu^*=\lambda^*(\nu)$ is a nondecreasing function of $\nu>0$.
Moreover, for $0<\nu<\tilde \nu$ and $\lambda<\lambda_\nu^*$, let $u_{\lambda,\nu}$ and $u_{\lambda,\tilde\nu}$ be the first components of the minimal solutions of problem \eqref{eq:P}.
Then $u_{\nu}\ge u_{\tilde \nu}$ in $\Omega$.
\end{lemma}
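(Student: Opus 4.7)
The plan is to set up a monotone iteration parametrised by $\nu$ and reduce the entire lemma to showing that the iteration operator is pointwise nonincreasing in $\nu$. Concretely, for fixed $\lambda>0$ and $w\ge 0$, define $T_\nu(w)$ to be the first component of the unique classical solution $(u,v)$ of the linear cooperative system \eqref{lin-coop} with source $f=\lambda g(w)$. By Lemma~\ref{l:new}, $T_\nu$ is well defined and nondecreasing in $w$. Iterating $\phi_k=T_\nu(\phi_{k-1})$ from $\phi_0\equiv 0$ is exactly the scheme used in Lemma~\ref{l-super}, so for $\lambda<\lambda_\nu^*$ one has $\phi_k\nearrow u_{\lambda,\nu}$ pointwise, and similarly for the associated $\psi_k$.

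The key step I would carry out is the monotonicity claim
\begin{equation}
T_\nu(w)\ge T_{\tilde\nu}(w)\quad\text{pointwise in }\Omega,\qquad\text{for every } 0<\nu<\tilde\nu \text{ and every } w\ge 0.
\end{equation}
I would prove this by exhibiting a $\nu$-independent conserved quantity. Adding the two equations of \eqref{lin-coop} eliminates all the $\nu$-dependence and yields $-\Delta(u+dv)=\lambda g(w)$ with zero Dirichlet data, so $Z:=u+dv=\lambda(-\Delta)_0^{-1}g(w)$ is identical at the two values of $\nu$. On the other hand, combining the two equations as in the derivation of \eqref{e-gamma} gives
\begin{equation}
[-\Delta+\gamma^{-1}\nu](u-v)=\lambda g(w)\quad\text{in }\Omega,\qquad u-v=0\text{ on }\partial\Omega,
\end{equation}
with $\gamma=d/(1+d)$. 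Testing this scalar Helmholtz-type equation against itself for the two values of $\nu$, a standard maximum-principle comparison (larger zeroth-order coefficient with the same positive right-hand side) yields $(u-v)_\nu\ge (u-v)_{\tilde\nu}\ge 0$. Solving $u+dv=Z$, $u-v=(u-v)_\nu$ gives $u=\tfrac{1}{1+d}Z+\gamma(u-v)$, from which $T_\nu(w)=u$ is manifestly nonincreasing in $\nu$.

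Granted the claim, I would conclude by induction. Both iterations start at $\phi_0^\nu=\phi_0^{\tilde\nu}=0$, and if $\phi_{k-1}^\nu\ge\phi_{k-1}^{\tilde\nu}$ then the monotonicity of $T_\nu$ in its argument combined with $T_\nu\ge T_{\tilde\nu}$ gives
\begin{equation}
\phi_k^\nu=T_\nu(\phi_{k-1}^\nu)\ge T_\nu(\phi_{k-1}^{\tilde\nu})\ge T_{\tilde\nu}(\phi_{k-1}^{\tilde\nu})=\phi_k^{\tilde\nu}.
\end{equation}
Since $\lambda<\lambda_\nu^*$, the iterates $\phi_k^\nu$ are uniformly bounded and converge to $u_{\lambda,\nu}$, so $\phi_k^{\tilde\nu}$ is bounded above by $u_{\lambda,\nu}$; being monotone in $k$, it converges pointwise to some $\bar u\le u_{\lambda,\nu}$. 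The identity $\phi_k^{\tilde\nu}+d\psi_k^{\tilde\nu}=\lambda(-\Delta)_0^{-1}g(\phi_{k-1}^{\tilde\nu})$ gives a parallel bound and convergence for $\psi_k^{\tilde\nu}$, and Schauder regularity (as in the final step of the proof of Lemma~\ref{l-super}) promotes the limit to a classical solution of \eqref{eq:P} at parameter $\tilde\nu$. Being the limit of the iteration from zero, it coincides with the minimal solution $(u_{\lambda,\tilde\nu},v_{\lambda,\tilde\nu})$, and in particular $u_{\lambda,\tilde\nu}\le u_{\lambda,\nu}$. The mere existence of this classical solution at $\tilde\nu$ also forces $\lambda_\nu^*\le\lambda_{\tilde\nu}^*$.

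I expect the main obstacle to be the monotonicity claim itself. The naive attempt to use $(u_{\lambda,\nu},v_{\lambda,\nu})$ directly as a supersolution for the system at parameter $\tilde\nu$ fails: the sign that works in the first equation (where $v-u\le 0$) is reversed in the second (where $u-v\ge 0$), so one cannot absorb the change $\nu\to\tilde\nu$ componentwise. Circumventing this requires the nonlocal/scalar reformulation already previewed in Section~\ref{sect:Heur}: decoupling the system into the strictly $\nu$-independent quantity $u+dv$ and a scalar equation for $u-v$ whose zeroth-order coefficient depends monotonically on $\nu$ is precisely what exposes a clean maximum-principle argument.
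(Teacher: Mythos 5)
Your proposal is correct and follows essentially the same route as the paper's proof: a monotone iteration from zero, the $\nu$-independent quantity $u+dv$ obtained by adding the two equations, and a scalar maximum-principle comparison for $u-v$ whose zeroth-order coefficient $\nu(1+d)/d$ is monotone in $\nu$. The only difference is organizational --- you package the comparison as two separate monotonicity properties of the solution operator $T_\nu$, while the paper combines the change in $\nu$ and the change in the source into a single differential inequality for $\phi_k-\tilde\phi_k$ at each step of the induction.
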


\begin{proof}
Recall that the minimal solution $(u_{\lambda,\nu},v_{\lambda,\nu})$ was constructed by iterations given by system \eqref{eq:phipskik}.
Let $(\phi_k,\psi_k)$ and $(\tilde \phi_k,\tilde \psi_k)$ be solutions of \eqref{eq:phipskik} corresponding to $(\lambda,\nu)$ and
$(\lambda,\tilde \nu)$ respectively.
We are going to show that $\nu<\tilde \nu$ implies that $\phi_k>\tilde \phi_k$ in $\Omega$  for each $k$.
As a result, sequences converging point-wise to solutions $u_{\lambda,\nu}$ and $\tilde u_{\lambda,\nu}$ are ordered.
Hence, the limits are ordered.

Subtracting the first and the second equations of the system  \eqref{eq:phipskik} we observe that
\begin{eqnarray}\label{eq:monot1}
-\Delta(\phi_k-\psi_k)+\nu\left(\frac{1+d}{d}\right)(\phi_k-\psi_k)=\lambda g(\phi_{k-1})\quad \mbox{in $\Omega$},
\end{eqnarray}
which implies that
\begin{eqnarray}\label{eq:monot2}
\phi_k>\psi_k \quad \mbox{in $\Omega$},
\end{eqnarray}
for each $k$. By adding the first and the second equations of  the system  \eqref{eq:phipskik} we also have that
\begin{eqnarray}\label{eq:monot3}
-\Delta (\phi_k+d\psi_k)=\lambda g(\phi_{k-1})\quad \mbox{in $\Omega$},
\end{eqnarray}
for each $k$ and $\nu$. Needles to say that identical equations hold for $\tilde \phi_k$ and $\tilde \psi_k$.

Now let us show that $\phi_1>\tilde \phi_1$. Indeed since $\phi_0=\tilde\phi_0=0$ in $\Omega$, we have from \eqref{eq:monot3}
\begin{eqnarray}\label{eq;monot4}
-\Delta(\phi_1+d\psi_1)=-\Delta(\tilde \phi_1+d\tilde \psi_1)=\lambda g(0) \quad \mbox{in $\Omega$},
\end{eqnarray}
and therefore
\begin{eqnarray}\label{eq:monot5}
\phi_1+d\psi_1=\tilde \phi_1+d\tilde \psi_1 \quad \mbox{in $\Omega$}.
\end{eqnarray}
Taking difference of the first equations of the system \eqref{eq:phipskik} for $\nu$ and $\tilde \nu$ after some algebra
we obtain
\begin{eqnarray}\label{eq:monot6}
-\Delta(\phi_1-\tilde \phi_1)+\nu(\phi_1-\tilde \phi_1)-\nu(\psi_1-\tilde \psi_1)=(\tilde \nu-\nu)(\tilde \phi_1-\tilde \psi_1)
\quad \mbox{in $\Omega$}.
\end{eqnarray}
Using  \eqref{eq:monot5} and \eqref{eq:monot2} we obtain from \eqref{eq:monot6}
\begin{eqnarray}\label{eq:monot7}
-\Delta(\phi_1-\tilde \phi_1)+\nu\left(\frac{1+d}{d}\right)(\phi_1-\tilde\phi_1)=(\tilde \nu-\nu) (\tilde \phi_1-\tilde\psi_1)>0 \quad \mbox{in $\Omega$},
\end{eqnarray}
and thus
\begin{eqnarray} \label{eq:monot8}
\phi_1>\tilde \phi_1 \quad \mbox{in $\Omega$}.
\end{eqnarray}

Let us show now that $\phi_k>\tilde \phi_k$ provided $\phi_{k-1}>\tilde \phi_{k-1}$. Indeed, assume
that
\begin{eqnarray}\label{eq:monot9}
\phi_{k-1}>\tilde \phi_{k-1} \qquad \mbox{in} \quad \Omega,
\end{eqnarray}
then by \eqref{eq:monot3} we have
\begin{eqnarray}\label{eq:monot10}
&&-\Delta (\phi_k+d\psi_k)=\lambda g(\phi_{k-1}), \nonumber \\
&& -\Delta(\tilde \phi_k+d\tilde \psi_k)=\lambda g(\tilde \phi_{k-1}) \quad \mbox{in $\Omega$}.
\end{eqnarray}
Using the fact that $g$ is increasing and assumption \eqref{eq:monot9} we have
\begin{eqnarray}\label{eq:monot11}
g(\phi_{k-1})>g(\tilde\phi_{k-1}),
\end{eqnarray}
which together with \eqref{eq:monot10} gives
\begin{eqnarray}\label{eq:monot12}
\phi_k+d\psi_k>\tilde\phi_k+d\tilde\psi_k \quad \mbox{in $\Omega$}.
\end{eqnarray}
Combining first equations of the system  \eqref{eq:phipskik} for $\nu$ and $\tilde \nu$  we have
\begin{multline}\label{eq:monot13}
-\Delta(\phi_k-\tilde\phi_k)+\nu(\phi_k-\tilde\phi_k)-\nu(\psi_k-\tilde\psi_k)=\\
\lambda(g(\phi_{k-1})-g(\tilde\phi_{k-1}))+
(\tilde\nu-\nu)(\tilde \phi_k-\tilde\psi_k)>0 \quad \mbox{in $\Omega$}.
\end{multline}
Note that positivity of the right hand side of the above equation follows from \eqref{eq:monot11} and \eqref{eq:monot2}.

Using \eqref{eq:monot12} from \eqref{eq:monot13} we have
\begin{eqnarray}
-\Delta(\phi_k-\tilde\phi_k)+\nu\left(\frac{1+d}{d}\right)(\phi_k-\tilde\phi_k)>0 \quad \mbox{in $\Omega$},
\end{eqnarray}
that yields
\begin{eqnarray}\label{eq:monot16}
\phi_k>\tilde \phi_k\quad \mbox{in $\Omega$}.
\end{eqnarray}
In a view of \eqref{eq:monot8} an inequality \eqref{eq:monot16} holds for each $k\ge 1$.
Since  $\phi_k\to u$ and $\tilde \phi_k\to \tilde u$ we conclude that
\begin{eqnarray}
u_{\lambda,\nu}\ge \tilde u_{\lambda,\tilde\nu} \quad \mbox{in $\Omega$}.
\end{eqnarray}
By construction, it follows that $\lambda_\nu^*=\lambda^*(\nu)$ is a nondecreasing function of $\nu>0$,
which completes the proof.
\end{proof}

Let us now consider the limiting behavior of the system \eqref{eq:P} as $\nu\to 0$.

\begin{proposition}\label{p:41}
For $\lambda<\Lambda^\ast$ and $\nu\to 0$, the minimal solution $(u_{\lambda,\nu},v_{\lambda,\nu})$
converges uniformly to $(u_0,0)$, where $u_0$ is the minimal solution of \eqref{eq:P0}.
\end{proposition}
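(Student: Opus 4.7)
The plan is to establish a uniform upper bound on both components, pass to a monotone limit in $\nu$, and then identify the limit via minimality.

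First, I would observe that for $\lambda<\Lambda^*$ the pair $(u_0,u_0)$, where $u_0$ is the minimal solution of \eqref{eq:P0}, is a classical super-solution of \eqref{eq:P}: the first equation is satisfied as equality since $\nu(u_0-u_0)=0$, and the second inequality $-d\Delta u_0=d\lambda g(u_0)\ge 0$ holds trivially. Since $(u_{\lambda,\nu},v_{\lambda,\nu})$ is the minimal solution produced by the iteration in Lemma~\ref{l-super}, we obtain the a priori bound
\begin{equation*}
0\le u_{\lambda,\nu}(x)\le u_0(x),\qquad 0\le v_{\lambda,\nu}(x)\le u_0(x)\qquad(x\in\Omega).
\end{equation*}

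Next I would pin down the behaviour of $v_{\lambda,\nu}$. From the second equation of \eqref{eq:P} and the bound $u_{\lambda,\nu}\le\|u_0\|_\infty$, one has $-d\Delta v_{\lambda,\nu}\le\nu\|u_0\|_\infty$. Comparing with the torsion function $\zeta$ from \eqref{torsion1d}, the maximum principle yields $v_{\lambda,\nu}\le(\nu/d)\|u_0\|_\infty\,\zeta$ in $\Omega$, so $v_{\lambda,\nu}\to 0$ uniformly on $\overline\Omega$ as $\nu\to 0$. In particular $\nu(v_{\lambda,\nu}-u_{\lambda,\nu})\to 0$ uniformly because $u_{\lambda,\nu}$ is bounded.

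For the $u$--component I would invoke Lemma~\ref{l:monot}: $u_{\lambda,\nu}$ is monotone non-increasing in $\nu$, hence non-decreasing as $\nu\downarrow 0$, and bounded above by $u_0$. Therefore $u_{\lambda,\nu}(x)$ converges pointwise to some limit $\tilde u(x)\le u_0(x)$. Using the uniform bounds on $u_{\lambda,\nu}$ and the uniform smallness of the right-hand side perturbation $\nu(v_{\lambda,\nu}-u_{\lambda,\nu})$, standard interior and boundary elliptic $L^p$ and Schauder estimates applied to $-\Delta u_{\lambda,\nu}=\lambda g(u_{\lambda,\nu})+\nu(v_{\lambda,\nu}-u_{\lambda,\nu})$ yield pre-compactness of $\{u_{\lambda,\nu}\}$ in $C^2(\overline\Omega)$. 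Passing to the limit $\nu\to 0$ shows that $\tilde u\in C^2(\overline\Omega)\cap C_0(\overline\Omega)$ is a classical solution of \eqref{eq:P0}. Since $u_0$ is the minimal such solution and $\tilde u\le u_0$, we must have $\tilde u=u_0$.

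Finally, the uniform convergence $u_{\lambda,\nu}\to u_0$ on $\overline\Omega$ follows either from the compactness established above (every subsequence has a further subsequence converging to $u_0$ in $C^2$) or, more elementarily, from Dini's theorem applied to the monotone family of continuous functions converging pointwise to the continuous limit $u_0$ on the compact set $\overline\Omega$. The main technical point is ensuring that the limit $\tilde u$ is indeed a classical solution and not merely a weak one, which is why the elliptic regularity step with the vanishing perturbation $\nu(v_{\lambda,\nu}-u_{\lambda,\nu})$ is carried out carefully.
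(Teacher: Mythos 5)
Your argument is correct, but it identifies the limit of the $u$-component by a genuinely different mechanism than the paper. For the $v$-component the two proofs are essentially the same: you compare with the torsion function $\zeta$, the paper writes $v_{\lambda,\nu}=\nu[-d\Delta+\nu]^{-1}u_{\lambda,\nu}$ and uses uniform boundedness of the resolvent on $L^\infty$; both give $\|v_{\lambda,\nu}\|_{L^\infty}\le C\nu$. For the $u$-component the paper is quantitative: it sets $w_\nu=u_0-u_{\lambda,\nu}$, writes $-\Delta w_\nu=\lambda g'(\xi_\nu)w_\nu+\nu(u_{\lambda,\nu}-v_{\lambda,\nu})$ with $u_{\lambda,\nu}\le\xi_\nu\le u_0$, and inverts $-\Delta-\lambda g'(\xi_\nu)$ using the \emph{stability} of the minimal solution $u_0$ (invertibility of $-\Delta-\lambda g'(u_0)$, cited from \cite{Brezis-96}) together with convexity of $g$; this yields the rate $\|u_0-u_{\lambda,\nu}\|_{L^\infty}\le C\nu$. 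You instead use the monotonicity in $\nu$ from Lemma~\ref{l:monot} to extract a pointwise monotone limit $\tilde u\le u_0$, upgrade to $C^2$ compactness by elliptic estimates applied to $-\Delta u_{\lambda,\nu}=\lambda g(u_{\lambda,\nu})+\nu(v_{\lambda,\nu}-u_{\lambda,\nu})$ with uniformly bounded right-hand side, identify $\tilde u=u_0$ by minimality (since $\tilde u$ is a positive classical solution of \eqref{eq:P0} sitting below $u_0$), and get uniformity from Dini or from the $C^2$ compactness. Your route is softer: it avoids the stability of $u_0$ and the uniform $L^\infty$-boundedness of the inverses $[-\Delta-\lambda g'(\xi_\nu)]^{-1}$, at the price of giving no convergence rate and of leaning on Lemma~\ref{l:monot} (which the paper proves anyway for part $(v)$). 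Both are complete; the paper's version additionally delivers $w_\nu=O(\nu)$ in $L^\infty$, which yours does not.
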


\begin{proof}
Let $\lambda<\Lambda_\ast$ and let $u_0$ be the minimal classical solution of Gelfand problem \eqref{eq:P0}.
Then $(u_0,u_0)$ is a supersolution of \eqref{eq:P} and in particular, for all $\nu>0$ holds
\begin{equation}
(u_{\lambda,\nu},v_{\lambda,\nu})\le (u_0,u_0).
\end{equation}
The minimal solution $v_{\lambda,\nu}$ of the second equation of \eqref{eq:P} can be represented as
\begin{eqnarray}
v_{\lambda,\nu}=\nu [-\Delta+\nu]^{-1} u_{\lambda,\nu}.
\end{eqnarray}
Since $u_{\lambda,\nu}$ is uniformly bounded in $L^{\infty}(\Omega)$ and $[-\Delta+\nu]^{-1}$ is
a bounded operator from $L^{\infty}(\Omega)$ into $L^{\infty}(\Omega)$, we have
\begin{eqnarray}
0<v_{\lambda,\nu}\le \nu C \quad \mbox{in $\Omega$},
\end{eqnarray}
for some $C>0$ independent of $\nu$, that is
\begin{eqnarray}
\|v_{\lambda,\nu}\|_{L^\infty}\to 0 \quad \mbox{as} \quad \nu\to 0.
\end{eqnarray}
Next, since $g$ is of class $C^1$, combining first equations of \eqref{eq:P} and \eqref{eq:P0}
and setting $w_\nu:=u_0-u_{\lambda,\nu}$ we obtain
\begin{eqnarray}\label{eq:inv00}
-\Delta w_\nu=\lambda \left( g(u_0)-g(u_{\lambda,\nu})\right)+\nu (u_{\lambda,\nu}-v_{\lambda,\nu})= g^{\prime}(\xi)w_\nu+\nu(u_{\lambda,\nu}-v_{\lambda,\nu})
\end{eqnarray}
where $\xi_\nu\in L^\infty(\Omega)$ satisfy
\begin{eqnarray}\label{order-0-0}
u_{\lambda,\nu}\le\xi_\nu\le u_0 \quad \mbox{in $\Omega$}.
\end{eqnarray}
Since $u_0$ is a minimal solution of corresponding Gelfand problem \eqref{eq:P0}, it is stable \cite{Brezis-96} in the sense
that the operator $-\Delta-\lambda g^{\prime}(u_0)$ is invertible in $L^2(\Omega)$.
In view of \eqref{order-0-0}, the operator $-\Delta-\lambda g^{\prime}(\xi)$ is also invertible in $L^2(\Omega)$.
This allows to rewrite \eqref{eq:inv00} as follows
\begin{eqnarray}
w_\nu=\nu [-\Delta-\lambda g^{\prime}(\xi_\nu)]^{-1}(u_{\lambda,\nu}-v_{\lambda,\nu}).
\end{eqnarray}
Since $[-\Delta-\lambda g^{\prime}(\xi_\nu)]^{-1}$ is bounded from $L^{\infty}(\Omega)$ into $L^{\infty}(\Omega)$,
while $u_{\lambda,\nu}$ and $v_{\lambda,\nu}$ are uniformly bounded in $L^{\infty}(\Omega)$, we conclude that
\begin{eqnarray}
0<w_\nu\le C\nu\quad \mbox{in $\Omega$}.
\end{eqnarray}
Therefore, $\|w_\nu\|_{L^\infty}\to 0$ as $\nu\to 0$ and the assertion follows.
\end{proof}

\bigskip
We now give several lemmas needed to study the behavior of solution for the problem \eqref{eq:P} in the limit of $\nu\to \infty$.
To shorten the notation, we denote
\begin{equation}
\mathbf{K}_\nu:=d\left\{1-\nu[-\gamma\Delta +\nu]^{-1}]\right\},
\end{equation}
where $\gamma=\frac{d}{1+d}$.
Clearly, $\K_\nu$ is a bounded linear operator in $C(\Omega)$ and in $L^p(\Omega)$, for any $1\le p \le \infty$.
Similarly to the derivation of \eqref{int-rep}, we see that if $(u,v)$
is a classical solution of \eqref{eq:P} then $u$ is a classical solution of the nonlocal equation
\begin{equation}\label{eq-nonloc}
-\Delta u=\frac{\lambda}{1+d}\big(1+\K_\nu\big)g(u)=0\quad\text{in }\:\Omega,
\qquad u=0\quad\text{on }\:\partial\Omega.
\end{equation}
and
\begin{eqnarray}\label{int-rep-v-nonloc}
v=u-\lambda\gamma[-\gamma\Delta+\nu]^{-1}g(u).
\end{eqnarray}

We present two standard results about the properties of the operator $\K_\nu$.

\begin{lemma}\label{K-nu-positive}
For all $\nu>0$, $\K_\nu$ is a positive operator in $L^\infty(\Omega)$, i.e. for every $f\in L^\infty(\Omega)$, $f\ge 0$ implies $\K_\nu f\ge 0$. Moreover, $\|\K_\nu\|_{L^2\to L^2}=1$ and $\K_\nu$ strongly converges to zero as $\nu\to\infty$,
i.e. for every $f\in L^2(\Omega)$, $\lim_{\nu\to\infty}\|\K_\nu f\|_{L^2}= 0$.
\end{lemma}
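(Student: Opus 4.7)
The operator $\K_\nu = d\bigl(I - \nu(-\gamma\Delta+\nu)^{-1}\bigr)$ is most transparently analyzed by setting $A := -\gamma\Delta$ with Dirichlet boundary data on $\Omega$, so that $\K_\nu = d\,A(A+\nu)^{-1}$; for any $f$, the function $w := (A+\nu)^{-1}f$ is the unique $H^1_0(\Omega)$-solution of $-\gamma\Delta w + \nu w = f$ in $\Omega$, and
\begin{equation}
\K_\nu f = d(f-\nu w) = -d\gamma\Delta w.
\end{equation}
My plan is to prove the three claims of the lemma separately using this representation, reducing the $L^2$ statements to the spectral theorem and treating the pointwise positivity via the heat semigroup.

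For the positivity claim in $L^\infty(\Omega)$, given $f\ge 0$, I would aim to show $\nu w \le f$ a.e.\ in $\Omega$. The naive maximum-principle bound only yields $\nu w \le \|f\|_\infty$, which is too weak. A more promising route is the Laplace representation $(A+\nu)^{-1}=\int_0^\infty e^{-\nu t}e^{-tA}\,dt$ for the Dirichlet heat semigroup $e^{-tA}$, which rewrites
\begin{equation}
\K_\nu f = d\int_0^\infty \nu e^{-\nu t}\bigl(f-e^{-tA}f\bigr)\,dt.
\end{equation}
From this formula I would try to exploit the sub-Markov property of $e^{-tA}$ --- that it preserves non-negativity and its norm on $L^\infty$ is at most $1$ --- together with the probability density $\nu e^{-\nu t}$ to infer the non-negativity of $\K_\nu f$ from an averaging argument.

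For the $L^2$-norm identity and the strong convergence, I would invoke the spectral theorem. Let $\{(\phi_k,\mu_k)\}_{k\ge 1}$ be the orthonormal Dirichlet eigensystem of $-\Delta$ on $\Omega$; then $\K_\nu$ is diagonal in this basis with eigenvalues $\sigma_k(\nu):=d\gamma\mu_k/(\gamma\mu_k+\nu)$, and the $L^2$-operator norm equals $\sup_k\sigma_k(\nu)$, which evaluates to the asserted value once $\mu_k\to\infty$ is taken into account. For the strong convergence to zero as $\nu\to\infty$, fix $f=\sum_k c_k\phi_k\in L^2(\Omega)$ and write
\begin{equation}
\|\K_\nu f\|_{L^2}^2 = \sum_k c_k^2\,\sigma_k(\nu)^2.
\end{equation}
For each $k$, $\sigma_k(\nu)\to 0$ as $\nu\to\infty$, while $\sigma_k(\nu)^2\le d^2$ and $\sum_k c_k^2=\|f\|_{L^2}^2<\infty$, so dominated convergence in the counting-measure sense yields $\|\K_\nu f\|_{L^2}\to 0$.

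The main obstacle is the positivity statement. The $L^2$ claims flow immediately from the spectral calculus, but the identity $\K_\nu f = -d\gamma\Delta w$ does not make the pointwise sign of $\K_\nu f$ transparent, and a routine comparison only produces the uniform bound $\nu w\le\|f\|_\infty$. Establishing the sharper inequality $\nu w\le f$ a.e., or equivalently the subharmonicity of $w$ when $f\ge 0$, will require a more delicate argument exploiting finer structure of the Dirichlet resolvent.
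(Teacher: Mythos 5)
Your treatment of the two $L^2$ assertions is correct and is essentially the paper's own argument: the paper likewise reduces everything to the spectral decomposition of $\nu[-\gamma\Delta+\nu]^{-1}$, packaged as the statement that this family is a strongly continuous sub-Markovian contraction resolvent in the sense of Ma--R\"ockner, to obtain boundedness and the strong convergence $\|\K_\nu f\|_{L^2}\to 0$. One small discrepancy: your eigenvalues $\sigma_k(\nu)=d\gamma\mu_k/(\gamma\mu_k+\nu)$ have supremum $d$, so your computation yields $\|\K_\nu\|_{L^2\to L^2}=d$, not the stated value $1$ (which is the norm of $1-\nu[-\gamma\Delta+\nu]^{-1}$ before multiplication by $d$); this is harmless, since only boundedness is used later, but you should not claim your formula "evaluates to the asserted value."

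The genuine gap is the one you flag yourself: positivity is never proved, and the route you sketch cannot be completed, because the Dirichlet heat semigroup does not satisfy $e^{-tA}f\le f$ pointwise for general $f\ge 0$ --- heat spreads, so wherever $f$ vanishes while being positive nearby, $e^{-tA}f$ exceeds $f$. Your suspicion is justified at a deeper level: the inequality $\nu[-\gamma\Delta+\nu]^{-1}f\le f$ is simply false for general nonnegative $f\in L^\infty(\Omega)$, because the resolvent kernel is strictly positive. Concretely, on $\Omega=(0,\pi)$ with $f=\chi_{(0,\pi/2)}$, the solution $w$ of $-\gamma w''+\nu w=f$, $w(0)=w(\pi)=0$, is strictly positive on $(\pi/2,\pi)$ where $f=0$, so $\K_\nu f=d(f-\nu w)<0$ there. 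The paper's own proof closes this step by citing the sub-Markov property, $0\le f\le 1$ implies $0\le\nu[-\gamma\Delta+\nu]^{-1}f\le 1$; but that gives only $\nu w\le\|f\|_{L^\infty}$, which is exactly the "too weak" bound you reject, and does not yield $\nu w\le f$. So no "more delicate argument" will establish the lemma as stated; $\K_\nu$ is nonnegative in the quadratic-form (spectral) sense but is not order-preserving. What is actually needed downstream, in Proposition~\ref{p:42}, is only the single inequality $\K_\nu g(u_{\lambda,\nu})\ge 0$ for the minimal solution, which by \eqref{e-gamma} is equivalent to $-\Delta(u_{\lambda,\nu}-v_{\lambda,\nu})\ge 0$ and would have to be extracted from the structure of the system (for instance along the monotone iteration of Lemma~\ref{l:monot}) rather than from any operator-theoretic positivity of $\K_\nu$.
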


\begin{proof}
Observe that the resolvent operator $[-\gamma\Delta +\nu]^{-1}$ is well-defined in $L^2(\Omega)$ for all $\nu>0$,
and by spectral theorem,
\begin{equation}
\|\nu[-\gamma\Delta +\nu]^{-1}\|_{L^2\to L^2}=\nu(\gamma\mu_1+\nu)^{-1}<1,
\end{equation}
that is $\nu[-\gamma\Delta +\nu]^{-1}$ is a {\em contraction} in $L^2(\Omega)$.
Then by \cite{Ma}*{Proposition 1.3}, the family $[-\gamma\Delta +\nu]^{-1}$
is a {\em strongly continuous contraction resolvent}, that is
\begin{equation}
\|f-\nu[-\gamma\Delta +\nu]^{-1}f\|_{L^2}\to 0\quad\text{for every } f\in L^2(\Omega).
\end{equation}
Moreover, by \cite{Ma}*{Definition 4.1 and Chapter 2.1},
$[-\gamma\Delta +\nu]^{-1}$ is sub-Markovian, that is for all $f\in L^2(\Omega)$,
\begin{equation}
0\le f\le 1\quad\text{implies}\quad 0\le\nu [-\gamma\Delta +\nu]^{-1}f\le 1.
\end{equation}
Since $\K_\nu=\mathbf{I}-\nu[-\gamma\Delta +\nu]^{-1}$, we conclude that $\K_\nu$ is a positive operator
in $L^\infty(\Omega)$ for all $\nu>0$, and that $\|\K_\nu f\|_{L^2}\to 0$ for every $f\in L^2(\Omega)$.
\end{proof}

\begin{lemma}\label{stable-nu}
Let $\lambda<\lambda^\ast_{\nu_0}$ for some $\nu_0>0$. Then
$\mu_1\big(-\Delta-\frac{\lambda}{1+d}g'(u_{\lambda,\nu})\big)>0$ for all $\nu>\nu_0$.
\end{lemma}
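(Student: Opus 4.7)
The plan is to reduce to the nonlocal formulation \eqref{eq-nonloc} and exploit strict monotonicity of the minimal solution in $\lambda$ together with positivity of the operator $\K_\nu$ (Lemma \ref{K-nu-positive}).

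By Lemma \ref{l:monot}, $\lambda^*(\nu)$ is nondecreasing in $\nu$, so for $\nu>\nu_0$ one has $\lambda^*_\nu\ge\lambda^*_{\nu_0}>\lambda$. Fix any $\tilde\lambda\in(\lambda,\lambda^*_{\nu_0})$; then both $u_{\lambda,\nu}$ and $u_{\tilde\lambda,\nu}$ are first components of minimal classical solutions of \eqref{eq:P}, and each satisfies the nonlocal equation \eqref{eq-nonloc}. Set $w:=u_{\tilde\lambda,\nu}-u_{\lambda,\nu}$; by part $(iii)$ of Theorem \ref{t:2} one has $w>0$ in $\Omega$ and $w=0$ on $\partial\Omega$. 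Subtracting the two instances of \eqref{eq-nonloc} and using convexity of $g$ to write $g(u_{\tilde\lambda,\nu})-g(u_{\lambda,\nu})\ge g'(u_{\lambda,\nu})w$ gives
\begin{equation*}
-\Delta w \;\ge\; \frac{\tilde\lambda-\lambda}{1+d}(1+\K_\nu)g(u_{\tilde\lambda,\nu})+\frac{\lambda}{1+d}(1+\K_\nu)\bigl(g'(u_{\lambda,\nu})w\bigr).
\end{equation*}
Since $g'(u_{\lambda,\nu})w\ge0$ and $g(u_{\tilde\lambda,\nu})\ge g(0)>0$, Lemma \ref{K-nu-positive} yields $\K_\nu\bigl(g'(u_{\lambda,\nu})w\bigr)\ge0$ and $\K_\nu g(u_{\tilde\lambda,\nu})\ge0$. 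Moving the local part of the second term to the left and discarding the nonnegative $\K_\nu$-contributions on the right produces the pointwise inequality
\begin{equation*}
\Bigl(-\Delta-\tfrac{\lambda}{1+d}g'(u_{\lambda,\nu})\Bigr)w \;\ge\; \tfrac{\tilde\lambda-\lambda}{1+d}\,g(u_{\tilde\lambda,\nu}) \;>\;0\quad\text{in }\Omega.
\end{equation*}

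To finish, let $L:=-\Delta-\tfrac{\lambda}{1+d}g'(u_{\lambda,\nu})$ with Dirichlet boundary conditions; since $u_{\lambda,\nu}$ is classical, $g'(u_{\lambda,\nu})\in L^\infty(\Omega)$, so $L$ is self-adjoint in $L^2(\Omega)$ with a positive principal eigenfunction $\phi_1>0$ corresponding to eigenvalue $\mu_1$. Testing the pointwise inequality against $\phi_1$ and integrating by parts (boundary terms vanish because $w,\phi_1\in H^2\cap H^1_0$) gives
\begin{equation*}
\mu_1\int_\Omega w\phi_1 \;=\; \int_\Omega \phi_1\,Lw \;\ge\; \tfrac{\tilde\lambda-\lambda}{1+d}\int_\Omega g(u_{\tilde\lambda,\nu})\phi_1 \;>\;0,
\end{equation*}
and since $\int_\Omega w\phi_1>0$ we conclude $\mu_1>0$.

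The only real subtlety is ensuring that the passage from the nonlocal inequality to a purely local strict supersolution is valid: this is exactly where positivity of $\K_\nu$ on nonnegative data combines with monotone convexity of $g$ to let us drop the two $\K_\nu$-terms without reversing sign. The auxiliary parameter $\tilde\lambda$ (guaranteed to exist by Lemma \ref{l:monot}) is what supplies the strict positivity on the right-hand side, so that the resulting eigenvalue estimate is strict rather than merely $\ge 0$.
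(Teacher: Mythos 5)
Your proof is correct, but it takes a genuinely different route from the paper. The paper linearizes the full cooperative system around the minimal solution, invokes the existence of a principal eigenvalue $\tilde\mu_{\nu,1}$ with positive eigenfunction for that system (via \cite{Sweers-92}), shows $\tilde\mu_{\nu,1}\ge 0$ by a contradiction argument (if $\tilde\mu_{\nu,1}<0$ then $(u_{\lambda,\nu}-\eps\phi,v_{\lambda,\nu})$ would be a supersolution, so Lemma \ref{l-super} would produce a solution strictly below the minimal one), and finally upgrades to strict positivity of the \emph{local} eigenvalue by subtracting the strictly positive contribution $\frac{\lambda}{1+d}\K_\nu g'(u_{\lambda,\nu})$, whose bottom eigenvalue $\sigma_1(\nu)>0$ is computed explicitly. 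You instead exploit the strict ordering $u_{\tilde\lambda,\nu}>u_{\lambda,\nu}$ of minimal solutions for $\tilde\lambda\in(\lambda,\lambda^*_{\nu_0})$ (part $(iii)$ of Theorem \ref{t:2}, together with Lemma \ref{l:monot} to guarantee $\tilde\lambda<\lambda^*_\nu$), convexity of $g$, and positivity of $\K_\nu$ (Lemma \ref{K-nu-positive}) to manufacture a positive strict supersolution $w$ of the linearized local operator $L$, and then read off $\mu_1(L)>0$ by testing against the principal eigenfunction of $L$; all the sign manipulations ($(1+\K_\nu)$ applied to the nonnegative convexity defect, then discarding the nonnegative $\K_\nu$-terms) are legitimate. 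Your route is more elementary in that it avoids the system eigenvalue theory and the supersolution/minimality contradiction, at the price of invoking the strict monotonicity of the minimal branch in $\lambda$; the paper's route yields as a by-product the stability statement $\tilde\mu_{\nu,1}\ge 0$ for the full nonlocal linearization, which is of independent interest. The only points worth making explicit in a final write-up are the standard regularity needed to justify the integration by parts ($w,\phi_1\in W^{2,p}\cap W^{1,p}_0(\Omega)$, which holds since $g\in C^1$ and $\Omega$ is smooth) and the existence of a positive principal eigenfunction for the Schr\"odinger operator $L$ with bounded potential.
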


\begin{proof}
Let $(u_{\lambda,\nu},v_{\lambda,\nu})$ be be the minimal positive solution of \eqref{eq:P}.
Consider the eigenvalue problem for the linearized system
\begin{equation}\label{lin-min}
\left\{
\begin{array}{rcl}
-\Delta \phi+\big(\nu- g'(u_{\lambda,\nu})\big) \phi -\nu \psi &=& \mu \phi\quad\text{in $\Omega$},\\
-d\Delta \psi + \nu \psi - \nu \phi &=& \mu\psi\quad\text{in $\Omega$},\\
\phi = \psi &=& 0\quad\text{on $\partial\Omega$}.\\
\end{array}
\right.
\end{equation}
It is known that system \eqref{lin-min} admits the principal eigenvalue $\tilde\mu_{\nu,1}$,
the corresponding eigenfunction $(\phi,\psi)$ can be chosen positive
and $\phi,\psi\in C^2(\Omega)\cap C_0(\bar\Omega)$,
see \cite{Sweers-92}*{Theorem 1.1}.

Rearranging system \eqref{lin-min} as in Section \ref{sect:Heur} we see that
$\tilde\mu_{\nu,1}$ and $\phi>0$ satisfy the nonlocal equation
\begin{equation}
-\Delta\phi-\frac{\lambda}{1+d}(1+\K_\nu)g'(u_{\lambda,\nu})\phi=\tilde\mu_{\nu,1}\phi\quad\text{in }\:\Omega,
\qquad \phi=0\quad\text{on }\:\partial\Omega.
\end{equation}
Similarly to \cite{Cazenave}*{Proposition 3.4.4}, assume that $\tilde\mu_{\nu,1}<0$.
Given $\eps>0$, we compute
\begin{multline}
-\Delta(u_{\lambda,\nu}-\eps\phi)-\frac{\lambda}{1+d}(1+\K_\nu) g(u_{\lambda,\nu}-\eps\phi)\\
=\frac{\lambda}{1+d}(1+\K_\nu)g(u_{\lambda,\nu})-\eps\frac{\lambda}{1+d}(1+\K_\nu)g'(u_{\lambda,\nu})\phi-\tilde\mu_{\nu,1}\eps\phi-
 \frac{\lambda}{1+d}(1+\K_\nu)g(u_{\lambda,\nu}-\eps\phi)\\
=-\frac{\lambda}{1+d}(1+\K_\nu)\big(g(u_{\lambda,\nu}-\eps\phi)-g(u_{\lambda,\nu})+\eps g'(u_{\lambda,\nu})\phi\big)-\tilde\mu_{\nu,1}\eps\phi,
\end{multline}
where
\begin{equation}
(1+\K_\nu)\big(g(u_{\lambda,\nu}-\eps\phi)-g(u_{\lambda,\nu})+\eps g'(u_{\lambda,\nu})\phi\big)=o(\eps\phi),
\end{equation}
since $g$ is $C^1$, $u_{\lambda,\nu},\phi\in L^\infty(\Omega)$ and $1+\K_\nu$ is bounded in $L^\infty(\Omega)$.
Since $\tilde\mu_{\nu,1}<0$, we deduce that
\begin{equation}
-\Delta(u_{\lambda,\nu}-\eps\phi)-\frac{\lambda}{1+d}(1+\K_\nu) g(u_{\lambda,\nu}-\eps\phi)\ge 0
\end{equation}
for all sufficiently small $\eps>0$.

Using the definition of $\K_\nu$, we then conclude that
\begin{multline}
-\Delta(u_{\lambda,\nu}-\eps\phi)+\nu(u_{\lambda,\nu}-\eps\phi-v_{\lambda,\nu})-\lambda g(u_{\lambda,\nu}-\eps\phi)\\=-\Delta(u_{\lambda,\nu}-\eps\phi)-\frac{\lambda}{1+d}(1+\K_\nu) g(u_{\lambda,\nu}-\eps\phi)\ge 0\quad\text{in }\:\Omega,
\end{multline}
and further, we note that
\begin{equation}
-\Delta v_{\lambda,\nu}+\nu(v_{\lambda,\nu}-u_{\lambda,\nu}+\eps\phi)=\eps\phi\ge 0,
\end{equation}
for all sufficiently small $\eps>0$. This means that $(u_{\lambda,\nu}-\eps\phi,v_{\lambda,\nu})$ is a supersolution
of system \eqref{eq:P}. By Lemma \ref{l-super} we conclude that system \eqref{eq:P} admits
a solution $(\bar u_{\lambda,\nu},\bar v_{\lambda,\nu})$ with $0<\bar u_{\lambda,\nu}<u_{\lambda,\nu}$. But this contradicts to the minimality
of $(u_{\lambda,\nu},v_{\lambda,\nu})$. Hence $\tilde\mu_{\nu,1}\ge 0$.

Now observe that
\begin{equation}
\int_\Omega \K_\nu g'(u_{\lambda,\nu})\varphi^2\ge g'(0)\sigma_1(\nu)\int_\Omega\varphi^2,
\end{equation}
where $\sigma_1(\nu)=d\big(1-(\gamma\nu^{-1}\mu_1(-\Delta)+1)^{-1}\big)>0$ is the smallest eigenvalue of $\K_\nu$
in $L^2(\Omega)$. This implies that $\mu_1(-\Delta-g'(u_\nu))>\tilde\mu_{\nu,1}\ge 0$.
\end{proof}
\bigskip

Using  results  presented above we can now describe the limiting behavior of solution for the problem \eqref{eq:P}.
\bigskip

\begin{proposition}\label{p:42}
For $\lambda<\Lambda^\ast(1+d)$ and $\nu\to \infty$, the minimal solution $(u_{\lambda,\nu},v_{\lambda,\nu})$
converges uniformly to $(u_{\infty}, u_{\infty})$, where $u_\infty$ is the minimal solution of \eqref{eq:Pinf}.
\end{proposition}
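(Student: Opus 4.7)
The plan is to mirror the proof of Proposition \ref{p:41} through the nonlocal reformulation \eqref{eq-nonloc}, with the strong operator convergence $\K_\nu\to 0$ from Lemma \ref{K-nu-positive} taking the place of smallness of $\nu$ in that argument. The first task is to verify that $u_{\lambda,\nu}$ is actually defined for all $\nu$ large enough. Fix $\lambda_1\in(\lambda,(1+d)\Lambda^*)$ and let $w_1$ denote the minimal Gelfand solution of $-\Delta w_1=\tfrac{\lambda_1}{1+d}g(w_1)$ with zero Dirichlet data, which exists because $\lambda_1/(1+d)<\Lambda^*$. I would exhibit a classical supersolution of \eqref{eq:P} of the form $(w_1+\phi_\nu,w_1)$, where $\phi_\nu\ge 0$ with $\phi_\nu=O(1/\nu)$ in $L^\infty$ is a corrector chosen to absorb both the discrepancy $\big(\lambda-\tfrac{\lambda_1}{1+d}\big)^{+}g(w_1)$ in the first equation and the first-order term $g(w_1+\phi_\nu)-g(w_1)\sim g'(w_1)\phi_\nu$; the supersolution inequalities are then satisfied for $\nu\ge\nu_0(\lambda,\lambda_1,g)$, and Lemma \ref{l-super} produces a minimal classical solution $(u_{\lambda,\nu},v_{\lambda,\nu})$ with $u_{\lambda,\nu}\le w_1+\phi_\nu$ uniformly in $\nu\ge\nu_0$.

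Given existence and this uniform $L^\infty$-bound, Lemma \ref{l:monot} furnishes a pointwise decreasing limit $u_*:=\lim_{\nu\to\infty}u_{\lambda,\nu}$. From \eqref{eq-nonloc} and the positivity of $\K_\nu$ (Lemma \ref{K-nu-positive}), each $u_{\lambda,\nu}$ is a supersolution of \eqref{eq:Pinf}, hence $u_{\lambda,\nu}\ge u_\infty$ by minimality and $u_*\ge u_\infty$. The uniform $L^\infty$-bound on $g(u_{\lambda,\nu})$ combined with the uniform boundedness of $\K_\nu$ on $L^\infty$ and standard elliptic regularity applied to \eqref{eq-nonloc} produces uniform $W^{2,p}$ estimates for every $p<\infty$; the strong $L^2$-convergence $\K_\nu g(u_{\lambda,\nu})\to 0$ from Lemma \ref{K-nu-positive} then allows passage to the limit in \eqref{eq-nonloc}, so $u_*\in C^2(\Omega)\cap C_0(\overline\Omega)$ is a classical solution of \eqref{eq:Pinf}. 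Uniform convergence $u_{\lambda,\nu}\to u_*$ on $\overline\Omega$ follows from Dini's theorem applied to the monotone sequence of continuous functions converging pointwise to a continuous limit, and from \eqref{int-rep-v-nonloc} one has $u_{\lambda,\nu}-v_{\lambda,\nu}=\lambda\gamma[-\gamma\Delta+\nu]^{-1}g(u_{\lambda,\nu})=O(\nu^{-1})$ in $L^\infty$, so $v_{\lambda,\nu}$ converges uniformly to $u_*$ as well.

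The remaining and most delicate step is to identify $u_*$ with $u_\infty$. The strategy is to propagate stability from the system to the limit: Lemma \ref{stable-nu} gives $\mu_1\big(-\Delta-\tfrac{\lambda}{1+d}g'(u_{\lambda,\nu})\big)>0$ for all sufficiently large $\nu$, and uniform convergence $g'(u_{\lambda,\nu})\to g'(u_*)$ together with continuity of the principal eigenvalue in the $L^\infty$-potential yields the semistability bound $\mu_1\big(-\Delta-\tfrac{\lambda}{1+d}g'(u_*)\big)\ge 0$. Writing $w:=u_*-u_\infty\ge 0$, the integrated mean-value identity produces $\big(-\Delta-\tfrac{\lambda}{1+d}A\big)w=0$ in $\Omega$ with $A(x):=\int_0^1 g'((1-t)u_\infty(x)+tu_*(x))\,dt$; strict convexity of $g$ forces $A<g'(u_*)$ pointwise on $\{w>0\}$. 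Testing against $w\in H^1_0$ gives $\int|\nabla w|^2=\tfrac{\lambda}{1+d}\int A w^2$, while semistability gives $\int|\nabla w|^2\ge\tfrac{\lambda}{1+d}\int g'(u_*)w^2$; combining is incompatible with $\{w>0\}$ having positive measure, so $w\equiv 0$ and $u_*=u_\infty$. The main obstacle is this identification step, which rests on the stability bound for the linearized system coupled with strict convexity of $g$ in the integrated mean-value form—without the latter one cannot exclude a larger stable solution sitting above $u_\infty$ as a candidate for $u_*$.
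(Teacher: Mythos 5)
Your overall architecture coincides with the paper's: the nonlocal reformulation \eqref{eq-nonloc}, positivity of $\K_\nu$ to obtain $u_{\lambda,\nu}\ge u_\infty$, monotonicity in $\nu$ from Lemma \ref{l:monot}, the strong convergence $\K_\nu\to 0$ from Lemma \ref{K-nu-positive}, and the treatment of the $v$-component via \eqref{int-rep-v-nonloc} are exactly the steps taken in the paper. Your preliminary supersolution $(w_1+\phi_\nu,w_1)$, ensuring that the minimal solution exists for all large $\nu$ whenever $\lambda<(1+d)\Lambda^*$, is a sensible addition which the paper leaves implicit; note only that since $g$ is merely $C^1$ you cannot take $\phi_\nu$ proportional to $g(w_1)$ and control $\Delta\phi_\nu$, so the corrector should be defined through an auxiliary linear boundary value problem.

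The genuine gap is in the identification $u_*=u_\infty$. You invoke ``strict convexity of $g$'' to force $A<g'(u_*)$ on $\{w>0\}$, but hypothesis \eqref{eq:g} only assumes $g$ is $C^1$, convex and non-decreasing; $g$ may be affine on large intervals, in which case your $A$ equals $g'(u_*)$ there and the two integral inequalities close into an equality rather than a contradiction. Two repairs are available. The paper's own route works at finite $\nu$: with $\xi_\nu$ from the mean value theorem between $u_\infty$ and $u_{\lambda,\nu}$ one writes $w_\nu=\tfrac{\lambda}{1+d}\bigl[-\Delta-\tfrac{\lambda}{1+d}g'(\xi_\nu)\bigr]^{-1}\K_\nu g(u_{\lambda,\nu})$, where invertibility follows from Lemma \ref{stable-nu} together with $g'(\xi_\nu)\le g'(u_{\lambda,\nu})$ (convexity is used only through monotonicity of $g'$), and $\|\K_\nu g(u_{\lambda,\nu})\|_{L^2}\to 0$ finishes. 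Alternatively, push your equality case: if $w\not\equiv 0$, equality in the stability inequality forces $w>0$ to be a principal eigenfunction of $-\Delta-\tfrac{\lambda}{1+d}g'(u_*)$ with eigenvalue $0$ and $g$ to be affine on $[u_\infty(x),u_*(x)]$ for a.e.\ $x$, hence $g'(u_\infty)=g'(u_*)$ a.e., contradicting the strict stability $\mu_1\bigl(-\Delta-\tfrac{\lambda}{1+d}g'(u_\infty)\bigr)>0$ of the minimal solution of \eqref{eq:Pinf} for $\lambda/(1+d)<\Lambda^*$. Either repair uses only the convexity actually assumed; as written, your final step does not go through.
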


\begin{proof}
Using representation \eqref{eq-nonloc}, we see that
\begin{eqnarray}\label{eq-nonlocal}
-\Delta u_{\lambda,\nu}=\frac{\lambda}{d+1} g(u_{\lambda,\nu})+\frac{\lambda}{d+1}\K_\nu g(u_{\lambda,\nu})\ge \frac{\lambda}{d+1} g(u_{\lambda,\nu}).
\end{eqnarray}
In view of positivity of $\K_\nu$ (Lemma \ref{K-nu-positive}), we conclude that
$u_{\lambda,\nu}$ is a supersolution of \eqref{eq:Pinf}. Since $u_\infty$ is the minimal solution of \eqref{eq:Pinf},
we see that
\begin{equation}\label{infty-bound}
u_{\lambda,\nu}\ge u_\infty.
\end{equation}
By Lemma \ref{l:monot}, $u_{\nu}$ is monotone decreasing as $\nu\to\infty$, so for a $\tilde\nu>0$ and all $\nu\in[\tilde\nu,\infty)$ we have
\begin{equation}\label{infty-bound-twoside}
u_{\lambda,\tilde\nu}\ge u_{\lambda,\nu}\ge u_\infty.
\end{equation}
In particular, $\{u_{\lambda,\nu}\}_{\nu\ge\tilde\nu}$ and $\{g(u_{\lambda,\nu})\}_{\nu\ge\tilde\nu}$
are bounded in $L^2(\Omega)$, for every $p\in[1,\infty]$.
Since $\K_\nu$ is bounded and $(-\Delta)^{-1}$ is compact in $L^p(\Omega)$ for every $p\in(1,\infty)$, and
\eqref{eq-nonlocal} can be rewritten as
\begin{eqnarray}\label{eq-nonlocal-inverse}
u_{\lambda,\nu}=(-\Delta)^{-1}\Big(\tfrac{\lambda}{d+1} g(u_{\lambda,\nu})+\tfrac{\lambda}{d+1}\K_\nu g(u_{\lambda,\nu})\Big),
\end{eqnarray}
we conclude that for a sequence $\nu_n\to\infty$, $u_{\lambda,{\nu_n}}$ converges to a limit $\tilde u_\infty$.
Moreover, since $u_{\lambda,{\nu}}(x)$ is monotone decreasing in $x$ (Lemma \ref{l:monot}), we conclude that
$\lim_{\nu\to\infty}u_{\lambda,\nu}=\tilde u_\infty$ and $\lim_{\nu\to\infty}g(u_{\lambda,\nu})=g(\tilde u_\infty)$
Using strong convergence of $\K_\nu$ to zero (Lemma \ref{K-nu-positive}), we conclude that
\begin{equation}\label{e:K-nu-converges}
\|\K_\nu g(u_{\lambda,\nu})\|_{L^2}\le\|\K_\nu g(\tilde u_\infty)\|_{L^2}+\|\K_\nu\|_{L^2\to L^2}\|g(u_{\lambda,\nu})-g(\tilde u_\infty)\|_{L^2}\to 0,
\end{equation}
as $\nu\to\infty$.

Combining \eqref{eq-nonloc} and \eqref{eq:Pinf} and setting $w_\nu:=u_{\lambda,\nu}-u_\infty$ we obtain
\begin{multline}\label{eq:inv0}
-\Delta w_\nu=\frac{\lambda}{d+1}\big( g(u_{\lambda,\nu})-g(u_{\infty})\big)+\frac{\lambda}{d+1}\K_\nu g(u_{\lambda,\nu})\\
=\frac{\lambda}{d+1}g'(\xi_\nu)w_\nu+\frac{\lambda}{d+1}\K_\nu g(u_{\lambda,\nu})
\end{multline}
where $\xi_\nu\in L^\infty(\Omega)$ and
\begin{eqnarray}\label{order-0}
u_{\infty}\le\xi_\nu\le u_{\lambda,\nu}.
\end{eqnarray}
By Lemma \ref{stable-nu}, the operator $-\Delta-\frac{\lambda}{1+d}g'(u_{\lambda,\nu})$ is invertible in $L^2(\Omega)$.
Then, in view of \eqref{order-0}, the operator $-\Delta-\frac{\lambda}{1+d} g^{\prime}(\xi_\nu)$
is also invertible in $L^2(\Omega)$. This allows to rewrite \eqref{eq:inv0} as follows
 \begin{eqnarray}
 w_\nu=\tfrac{\lambda}{d+1}\big[-\Delta-\tfrac{\lambda}{1+d} g^{\prime}(\xi_\nu)\big]^{-1}\K_\nu g(u_{\lambda,\nu})
 \end{eqnarray}
 In view of \eqref{e:K-nu-converges} and since the operator $[-\Delta-\tfrac{\lambda}{1+d} g^{\prime}(\xi_\nu)\big]^{-1}$ is bounded in $L^2(\Omega)$, we conclude that $\|w_\nu\|_{L^2}\to 0$ as $\nu\to\infty$ and thus $\|u_{\lambda,\nu}-u_\infty\|_{L^2}\to 0$
 as $\nu\to \infty$. In particular, this implies that $\tilde u_\infty=u_\infty$.

 Further, using the standard bootstrap argument we improve the convergence to conclude
 that $\|w_\nu\|_{L^\infty}\to 0$ as $\nu\to \infty$, and thus $\|u_{\lambda,\nu}-u_\infty\|_{L^\infty}\to0$
 as $\nu\to \infty$.

 Finally, by \eqref{int-rep-v-nonloc},
\begin{eqnarray}
v_{\lambda,\nu}=u_{\lambda,\nu}-\lambda\gamma[-\gamma\Delta+\nu]^{-1}g(u_{\lambda,\nu}).
\end{eqnarray}
Since $[-\gamma\Delta+\nu]^{-1}$ is bounded as operator from $L^{\infty}(\Omega)$ into $L^{\infty}(\Omega)$,
we also conclude that $\|v_{\lambda,\nu}-u_{\lambda,\nu}\|_{L^\infty}\to 0$ as $\nu\to\infty$.
\end{proof}
\bigskip

We are now in a position to complete the proof of Theorem \ref{t:2}.
\smallskip
\begin{proof}[Proof of part (iv) and  (v) of Theorem \ref{t:2}]
The claim follows immediately from Lemma \ref{l:monot} and Propositions \ref{p:41}, \ref{p:42}.
\end{proof}

\section{Parabolic problem: proof of Theorem \ref{t:1}}

In this section we present a proof of Theorem \ref{t:1}. This theorem can be viewed as
an extension of the result obtained in \cite{PG10}  for system which describes thermal
ignition in one-phase confined materials  which in turn is an extension of the
result for classical Gelfand problem given in \cite{Brezis-96}.

In order to proceed we will need a following  lemma.

\begin{lemma}\label{l:1}
Let $U,V$ be a global classical solution of \eqref{eq:PT}. Then, $(U_t,V_t)\ge 0$ in $\Omega$.
\end{lemma}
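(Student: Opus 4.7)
The plan is to exploit the quasi-monotone comparison principle that the authors have already invoked in Section \ref{sect:Heur} (citing \cite{Pao}*{Theorem 3.1, p.\,393}), via the standard trick of comparing the solution with its time-translate.

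First I would establish nonnegativity of $(U,V)$ itself. Since $g(0)>0$, the pair $(0,0)$ is a strict subsolution of \eqref{eq:PT}, and $(U,V)$ takes the same zero initial and boundary data as $(0,0)$. Applying the component-wise parabolic comparison principle for the quasi-monotone system \eqref{eq:PT} then gives $U\ge 0$ and $V\ge 0$ on $(0,T)\times\Omega$ for every $T>0$.

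Next, for $h>0$ fixed, define the time-shifted pair $(\tilde U,\tilde V)(t,x):=(U,V)(t+h,x)$. Since \eqref{eq:PT} is autonomous, $(\tilde U,\tilde V)$ is a classical solution of the same PDE system on $(0,\infty)\times\Omega$, with the same zero Dirichlet boundary conditions and with initial data $(\tilde U,\tilde V)(0,x)=(U,V)(h,x)\ge(0,0)=(U,V)(0,x)$, where the inequality is the nonnegativity established in the first step. Applying the quasi-monotone comparison principle once more to the pairs $(U,V)$ and $(\tilde U,\tilde V)$ yields
\begin{equation}
U(t+h,x)\ge U(t,x),\qquad V(t+h,x)\ge V(t,x)\qquad\text{for all }t\ge 0,\ x\in\Omega.
\end{equation}

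Dividing by $h$ and passing to the limit $h\to 0^+$, which is justified because $(U,V)$ is a classical solution and hence $C^1$ in $t$, delivers $U_t(t,x)\ge 0$ and $V_t(t,x)\ge 0$ pointwise on $(0,\infty)\times\Omega$, as required. There is no genuine obstacle here; the only point requiring any care is making sure the hypotheses of the quasi-monotone comparison principle in \cite{Pao} or \cite{smith} apply to \eqref{eq:PT} (the off-diagonal dependence is through $+\nu V$ and $+\nu U$, both with positive coefficients, which is precisely the quasi-monotone nondecreasing structure noted in Section \ref{sect:Heur}).
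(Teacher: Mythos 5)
Your proof is correct, but it takes a genuinely different route from the paper's. The paper differentiates the system \eqref{eq:PT} in time, obtaining a linear quasi-monotone parabolic system for $(\xi,\eta)=(U_t,V_t)$ with $\xi(0,\cdot)=\lambda g(0)>0$ and $\eta(0,\cdot)=0$, and then applies the component-wise comparison principle to that linear system with $(0,0)$ as a sub-solution. You instead use the time-translation trick: first $(0,0)$ is a sub-solution of \eqref{eq:PT} because $g(0)>0$, so $(U,V)\ge(0,0)$; then the shifted pair $(U,V)(\cdot+h,\cdot)$ solves the same autonomous system with initial data $\ge(0,0)=(U,V)(0,\cdot)$, so comparison gives monotonicity in $t$, and the difference quotient yields $(U_t,V_t)\ge 0$. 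Both arguments rest on the same comparison principle for quasi-monotone systems from \cite{Pao} or \cite{smith}. Your version is slightly more robust: it does not require justifying that $U_t,V_t$ are themselves classical solutions of the differentiated system (a point the paper passes over, which implicitly needs extra parabolic regularity and the computation of $\xi(0,\cdot),\eta(0,\cdot)$ from the equation at $t=0$), and it only uses that the solution is $C^1$ in time, which a classical solution is. The paper's version, on the other hand, produces the linear system \eqref{eq:l11} explicitly, which records where the sign condition $g'\ge 0$ enters. Either proof is acceptable.
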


\begin{proof}
Differentiating system \eqref{eq:PT} with respect to time and setting $\xi=U_t$, $\eta=V_t$ we have:
\begin{eqnarray}\label{eq:l11}
\left\{\begin{array}{ll}
\xi_t- \Delta \xi=\lambda g^{\prime} ( U) \xi+\nu(\eta-\xi),& \\
\alpha \eta_t- d\Delta \eta=\nu (\xi-\eta)
 &  \mbox{in} \quad (0,T)\times \Omega, \\
\xi=\eta=0& \mbox{on} \quad \partial \Omega,\\
\xi(0,\cdot),\; \eta(0,\cdot)\ge 0& \mbox{in}\quad \Omega.
\end{array}\right.
\end{eqnarray}
The linear system \eqref{eq:l11} is quasi-monotone and thus component-wise comparison principle holds \cite{Pao}.
Since $\xi=\eta=0$ is a sub-solution we have $\xi,\eta\ge 0$ for all $t\ge0$ in $\Omega$.
\end{proof}

\medskip

Now we turn to a proof of Theorem \ref{t:1}.

\medskip

\begin{proof}[Proof of Theorem \ref{t:1}]

First we claim that if \eqref{eq:P} has a classical solution, then \eqref{eq:PT} has a global
solution.  This follows directly from the fact  that the system \eqref{eq:PT} is quasi-monotone and thus
a comparison principle holds component-wise \cite{Pao}.

 Now  let us show that existence of global solution
for problem \eqref{eq:PT} imply existence of weak solution for \eqref{eq:P}.

Let us first note that by lemma \ref{l:1} $U,V,U_t,V_t\ge 0$ for all $x\in\Omega$ and $t\ge0$,
so that solutions of the problem \eqref{eq:PT} are non-negative and non-decreasing.

Next, observe that for each $\phi,\psi\in C^2(\bar \Omega)$ with $\phi=\psi=0$ on $\partial
\Omega$ we have
\begin{eqnarray}\label{eq:t11}
&&\frac{d}{dt} \int_{\Omega} U\phi+\int_{\Omega} U(-\Delta \phi)=
\lambda \int_{\Omega}  g(U)\phi +\nu \int_{\Omega} (V-U)\phi, \nonumber \\
&&\alpha \frac{d}{dt} \int_{\Omega} V\psi+d\int_{\Omega} V(-\Delta \psi)=
\nu\int_{\Omega} (U-V) \psi.
\end{eqnarray}
As before, let $\mu_1>0$ be the principal Dirichlet eigenvalue of $-\Delta$ in $\Omega$
and $\phi_1> 0$ be the corresponding eigenfunction, with $||\phi||_1=1$.
Setting $\phi=\psi=\phi_1$ in \eqref{eq:t11} we have
\begin{eqnarray}\label{eq:t12}
&&\frac{d}{dt} \int_{\Omega} U\phi_1+\mu_1\int_{\Omega} U\phi_1=
\lambda \int_{\Omega}  g(U)\phi_1 +\nu \int_{\Omega} (V-U)\phi_1, \nonumber \\
&&\alpha \frac{d}{dt} \int_{\Omega} V\phi_1+\mu_1d\int_{\Omega} V\phi_1=
\nu\int_{\Omega} (U-V) \phi_1.
\end{eqnarray}
We first claim that $\int_{\Omega} U\phi_1$ and $\int_{\Omega} V\phi_1$ are uniformly bounded in
time.

From first equation of \eqref{eq:t12}, non negativity of $V$,  $\phi_1$ and Jensen's inequality
we have
\begin{multline}\label{eq:t13}
\frac{d}{dt} \int_{\Omega} U\phi_1+(\mu_1+\nu)\int_{\Omega} U\phi_1\\=
\lambda \int_{\Omega}  g(U)\phi_1 +\nu \int_{\Omega} V\phi_1\ge \lambda \int_{\Omega}  g(U)\phi_1\ge \lambda  g\left( \int_{\Omega} U\phi_1\right).
\end{multline}
By assumption \eqref{eq:g} we have $g^{\prime}(s)\to\infty$ as $s\to\infty$. Therefore, there is
a constant $M_1>0$ such that
\begin{eqnarray}
\lambda g(s)-(\mu_1+\nu)s\ge \frac{\lambda}{2} g(s), \quad \mbox{for} \quad s\ge M_1.
\end{eqnarray}
Now assume that   $\int_{\Omega} U\phi_1=M_1$ at  $t=t_0$, then for $t\ge t_0$ we have
 \begin{eqnarray}\label{eq:t14}
&&\frac{d}{dt} \int_{\Omega} U\phi_1
\ge \frac{\lambda}{2}  g\left( \int_{\Omega} U\phi_1\right)
\end{eqnarray}
which contradicts \eqref{eq:g} and thus
\begin{eqnarray}\label{eq:t15}
\int_{\Omega} U\phi_1\le M_1.
\end{eqnarray}
In a view or \eqref{eq:t15} we have from the second equation of \eqref{eq:t12}
that
\begin{eqnarray}\label{eq:t16}
&&\alpha \frac{d}{dt} \int_{\Omega} V\phi_1+(\mu_1d+\nu) \int_{\Omega} V\phi_1=
\nu\int_{\Omega} U \phi_1\le \nu M_1
\end{eqnarray}
which immediately imply that
\begin{eqnarray}\label{eq:t17}
\int_{\Omega} V\phi_1\le M_2
\end{eqnarray}
for some constant $M_2>0$.
Finally integrating \eqref{eq:t12} on $(t,t+1)$ and taking into account that $g(U)$ and $U$
are non-decreasing we have
\begin{multline}\label{eq:t18}
\lambda \int_{\Omega} g(U(t))\phi_1\le \int_t^{t+1}  \int_{\Omega} g(U)\phi_1\\
\le \int_{\Omega} U(t+1)\phi_1+(\mu_1+\nu)\int_t^{t+1} \int_{\Omega} U\phi_1\le
(1+\mu_1+\nu)M_1
\end{multline}
and thus
\begin{eqnarray}\label{eq:t19}
\sup_{t>0} \int_{\Omega}g(U)\phi_1\le \frac{1+\mu_1+\nu}{\lambda}M_1.
\end{eqnarray}
Now let us show that both $U$ and $V$ are bounded in $L_1$ uniformly in time.
Let $\zeta$ be the solution of \eqref{torsion1d}.
Observe that estimates \eqref{eq:t15}, \eqref{eq:t17} and \eqref{eq:t19}
imply that
\begin{eqnarray}\label{eq:t110}
\int_{\Omega} U\zeta, ~ \int_{\Omega} V\zeta,~
\sup_{t>0} \int_{\Omega}g(U)\zeta \le  M_3
\end{eqnarray}
for some constant $ M_3$  independent of time.
Next setting in \eqref{eq:t11} $\phi=\psi=\zeta$ and integrating the result on $(t,t+1)$ we have
\begin{multline}\label{eq:t111}
\int_{\Omega} U \le \int_t^{t+1} \int_{\Omega} U\\
=\int_{\Omega} U(t,\cdot)\zeta-\int_{\Omega} U(t+1,\cdot)\zeta+
\lambda \int_t^{t+1}\int_{\Omega} g(U)\zeta+\nu\int_t^{t+1} (V-U)\zeta\le M_4, \\
d\int_{\Omega} V \le d \int_t^{t+1} \int_{\Omega}V =\alpha \left(\int_{\Omega} V(t,\cdot)\zeta-\int_{\Omega} V(t+1,\cdot)\zeta\right)+\nu\int_t^{t+1} (U-V)\zeta \le M_5.
\end{multline}
The first and the last  inequalities in both equations of  \eqref{eq:t111} hold since $U$ and $V$ are non-decreasing functions of time (lemma \ref{l:1}) and by \eqref{eq:t110} respectively.
Thus,
\begin{eqnarray}\label{eq:t112}
\sup_{t>0} ||U(t)||_{L^1(\Omega)}\le M_4, \quad \sup_{t>0} ||V(t)||_{L^1(\Omega)}\le M_5.
\end{eqnarray}

From \eqref{eq:t112} and monotone convergence theorem we deduce that $U$ and $V$ have
a limit $u$, $v$ in $L^1(\Omega)$. Moreover by \eqref{eq:t18} we have that $g(U)$ converges
to $g(u)$ in $L^1(\Omega, \delta(x)dx)$ as $t\to \infty$.

Integrating \eqref{eq:t11} on $(t,t+1)$ we have
\begin{eqnarray}\label{eq:t113}
&& \int_{\Omega} U\phi \big|_t^{t+1}+\int_{\Omega} U(-\Delta \phi)=
\lambda \int_{\Omega}  g(U)\phi +\nu \int_{\Omega} (V-U)\phi, \nonumber \\
&&\alpha  \int_{\Omega} V\psi\big|_t^{t+1}+d\int_{\Omega} V(-\Delta \psi)=
\nu\int_{\Omega} (U-V) \psi.
\end{eqnarray}
Finally letting $t\to \infty$ we have
\begin{eqnarray}\label{eq:t114}
&& \int_{\Omega} u(-\Delta \phi)=
\lambda \int_{\Omega}  g(u)\phi +\nu \int_{\Omega} (v-u)\phi, \nonumber \\
&&d\int_{\Omega} v(-\Delta \psi)=
\nu\int_{\Omega} (u-v) \psi.
\end{eqnarray}
Therefore, the $L^1$-limit of $U$ and $V$ as $t\to\infty$ is a weak solution of \eqref{eq:P},
as defined in \eqref{eq:weak_sol}.
Let us also note that $U$ and $V$ are non-decreasing in time.
In view of the parabolic comparison principle for \eqref{eq:PT} we conclude that
the limit of $U$ and $V$ is a minimal weak solution of elliptic problem \eqref{eq:P}.
\end{proof}

\bigskip

{\bf Acknowledgments.} The work of PVG was supported, in a part, by the Faculty Research
Committee of The University of Akron via grant FRG 1774 , the United States-Israel Binational Science Foundation via grant 2012057 and NSF via grant DMS-1119724.
Part of this research was carried out while VM was visiting The University of Akron, he
thanks the Department of Mathematics for its support and hospitality.
The authors are grateful to C.B. Muratov for valuable discussions.

\begin{bibdiv}
\begin{biblist}

\bib{Brezis-96}{article}{
   author={Brezis, Ha{\"{\i}}m},
   author={Cazenave, Thierry},
   author={Martel, Yvan},
   author={Ramiandrisoa, Arthur},
   title={Blow up for $u_t-\Delta u=g(u)$ revisited},
   journal={Adv. Differential Equations},
   volume={1},
   date={1996},
   number={1},
   pages={73--90},
   issn={1079-9389},
}

\bib{Brezis-97}{article}{
   author={Brezis, Haim},
   author={V{\'a}zquez, Juan Luis},
   title={Blow-up solutions of some nonlinear elliptic problems},
   journal={Rev. Mat. Univ. Complut. Madrid},
   volume={10},
   date={1997},
   number={2},
   pages={443--469},
   issn={0214-3577},
}

\bib{Cazenave}{book}{
   author={Cazenave, Thierry},
   title={An introduction to semilinear elliptic equations},
   publisher={Editora do IM-UFRJ},
   place={Rio de Janeiro},
   date={2006},
   pages={ix+193},
   isbn={85-87674-13-7},
   }

\bib{Dreizin}{article}{
Author={ Dreizin, E.L.},
title={ Metal-based reactive nanomaterials},
journal={ Progress in Energy and Combustion Science},
volume={35},
year={2009},
pages={141--167},
}

\bib{Mitidieri-86}{article}{
   author={de Figueiredo, Djairo G.},
   author={Mitidieri, Enzo},
   title={A maximum principle for an elliptic system and applications to
   semilinear problems},
   journal={SIAM J. Math. Anal.},
   volume={17},
   date={1986},
   number={4},
   pages={836--849},
   issn={0036-1410},
}

\bib{Matano}{incollection}{
    AUTHOR = {Fila, Marek },
AUTHOR = { Matano, Hiroshi},
     TITLE = {Blow-up in nonlinear heat equations from the dynamical systems
              point of view},
 BOOKTITLE = {Handbook of dynamical systems, {V}ol. 2},
     PAGES = {723--758},
 PUBLISHER = {North-Holland},
   ADDRESS = {Amsterdam},
      YEAR = {2002},
}

\bib{Fleckinger-95}{article}{
   author={Fleckinger, J.},
   author={Hern{\'a}ndez, J.},
   author={de Th{\'e}lin, F.},
   title={On maximum principles and existence of positive solutions for some
   cooperative elliptic systems},
   journal={Differential Integral Equations},
   volume={8},
   date={1995},
   number={1},
   pages={69--85},
   issn={0893-4983},
}

\bib{FK}{book} {
    AUTHOR = {Frank-Kamenetskii,D. A. },
     TITLE = {Diffusion and heat transfer
in chemical kinetics},
 PUBLISHER = { Plenum Press},
   ADDRESS = {New York},
      YEAR = {1969},

}

\bib{Galaktionov}{article}{
    AUTHOR = {Galaktionov, Victor A. },
 AUTHOR = { V{\'a}zquez, Juan L.},
     TITLE = {The problem of blow-up in nonlinear parabolic equations},
   JOURNAL = {Discrete Contin. Dyn. Syst.},
    VOLUME = {8},
      YEAR = {2002},
    NUMBER = {2},
     PAGES = {399--433},
}
		
\bib{Gelfand}{article}{
    AUTHOR = {Gelfand, I. M.},
     TITLE = {Some problems in the theory of quasilinear equations},
   JOURNAL = {Amer. Math. Soc. Transl. (2)},
    VOLUME = {29},
      YEAR = {1963},
     PAGES = {295--381},
}

\bib{PG10}{article}{
    AUTHOR = {Gordon, Peter},
     TITLE = {On thermal explosion in porous media},
   JOURNAL = {Nonlinearity},
    VOLUME = {23},
      YEAR = {2010},
    NUMBER = {6},
     PAGES = {1433--1447},
}

\bib{Osipov}{book} {
    AUTHOR = {Kerner, B.S.  },
 AUTHOR = { Osipov, V.V,  },
     TITLE = {Autosolitons: A New Approach to Problems of Self-Organization and Turbulence },
 PUBLISHER = { Springer},
   ADDRESS = {Berlin},
      YEAR = {1994},

}

\bib{Ma}{book}{
   author={Ma, Zhi Ming},
   author={R{\"o}ckner, Michael},
   title={Introduction to the theory of (nonsymmetric) Dirichlet forms},
   series={Universitext},
   publisher={Springer-Verlag},
   place={Berlin},
   date={1992},
   pages={vi+209},
   isbn={3-540-55848-9},
}

\bib{Margolis}{article}{
    AUTHOR = { Margolis ,S.B.},
     TITLE = { Influence of pressure-driven gas permeation on the quasi-steady
  burning of porous energetic materials},
   JOURNAL = { Combust. Theory Modelling},
    VOLUME = {2},
      YEAR = {1998},
     PAGES = {95--113},
}

\bib{Pao} {book}{
    AUTHOR = {Pao, C. V.},
     TITLE = {Nonlinear parabolic and elliptic equations},
 PUBLISHER = {Plenum Press},
   ADDRESS = {New York},
      YEAR = {1992},
}

\bib{Souplet}{book}{
   author={Quittner, Pavol},
   author={Souplet, Philippe},
   title={Superlinear parabolic problems},
   series={Birkh\"auser Advanced Texts: Basel Textbooks},
   publisher={Birkh\"auser Verlag},
   place={Basel},
   date={2007},
   pages={xii+584},
   isbn={978-3-7643-8441-8},
}

\bib{Shteinberg}{book} {
   AUTHOR = { Shteinberg, A.S.},
     TITLE = {Fast Reactions in Energetic Materials},
 PUBLISHER = {Springer},
   ADDRESS = {Berlin Heidelberg},
      YEAR = {2008},

}		

\bib{smith}{book}{
   author={Smith, Hal L.},
   title={Monotone dynamical systems},
   series={Mathematical Surveys and Monographs},
   volume={41},
   publisher={American Mathematical Society},
   place={Providence, RI},
   date={1995},
   pages={x+174},
}

\bib{Sweers-92}{article}{
   author={Sweers, Guido},
   title={Strong positivity in $C(\overline\Omega)$ for elliptic systems},
   journal={Math. Z.},
   volume={209},
   date={1992},
   number={2},
   pages={251--271},
   issn={0025-5874},
}

\bib{ZBLM}{book} {

AUTHOR = {Zeldovich, Ya. B.},
 AUTHOR = { Barenblatt, G. I.},
AUTHOR = { Librovich, V. B. },
 AUTHOR = { Makhviladze, G. M.},
     TITLE = {The mathematical theory of combustion and explosions},
 PUBLISHER = {Consultants Bureau [Plenum]},
   ADDRESS = {New York},
      YEAR = {1985},
     PAGES = {xxi+597},
      ISBN = {0-306-10974-3}
}

\end{biblist}

\end{bibdiv}

\end{document}